\theoremstyle{plain}
\newtheorem{thm}{Theorem}[section]
\newtheorem{cor}[thm]{Corollary}
\newtheorem{lem}[thm]{Lemma}
\newtheorem{prop}[thm]{Proposition}
\newtheorem{defn}[thm]{Definition}
\newtheorem{exa}[thm]{Example}
\begin{document}

\title [{{ On Graded $s$-Prime Submodules}}]{On Graded $s$-Prime Submodules}

 \author[{{H. Saber}}]{\textit{Hicham Saber}}

\address
{\textit{Hicham Saber, Department of Mathematics, Faculty of Science, University of Ha'il, Saudi Arabia.}}
\bigskip
{\email{\textit{hicham.saber7@gmail.com}}}

\author[{{T. Alraqad}}]{\textit{Tariq Alraqad}}

\address
{\textit{Tariq Alraqad, Department of Mathematics, Faculty of Science, University of Ha'il, Saudi Arabia.}}
\bigskip
{\email{\textit{t.alraqad@uoh.edu.sa}}}

 \author[{{R. Abu-Dawwas }}]{\textit{Rashid Abu-Dawwas }}

\address
{\textit{Rashid Abu-Dawwas, Department of Mathematics, Yarmouk
University, Jordan.}}
\bigskip
{\email{\textit{rrashid@yu.edu.jo}}}

 \subjclass[2010]{16W50, 13A02}

\date{}

\begin{abstract}In this article, we introduce the concepts of graded $s$-prime submodules  which is a
generalization of graded prime submodules.
We study the behavior of this notion  with respect to graded homomorphisms, localization of graded modules, direct product, and idealization.
  We succeeded to prove the existence of graded $s$-prime submodules in the case of graded-Noetherian modules.
Also, we provide some sufficient conditions for the existence of such objects in the general case, as well as, in the particular case of grading by $\mathbb{Z}$,  a finite group, or a polycyclic-by-finite group, in addition to crossed product grading.
\end{abstract}

\keywords{Graded prime submodules; graded torsion free modules, graded $S$-prime submodules, graded $S$-torsion free modules.
 }
 \maketitle

 \section{Introduction}

Throughout this article, $G$ will be a group with identity $e$, $R$ a commutative ring with a nonzero unity $1$ and $M$ an $R$-module. $R$ is said to be $G$-graded if $R=\displaystyle\bigoplus_{g\in G} R_{g}$ with $R_{g}R_{h}\subseteq R_{gh}$ for all $g, h\in G$ where $R_{g}$ is an additive subgroup of $R$ for all $g\in G$. The elements of $R_{g}$ are called homogeneous of degree $g$. If $x\in R$, then $x$ can be written as $\displaystyle\sum_{g\in G}x_{g}$, where $x_{g}$ is the component of $x$ in $R_{g}$.
 Also, we set $h(R)=\displaystyle\bigcup_{g\in G}R_{g}$, and $h^*(R)=h(R)\setminus\{0\}$. Moreover, it has been proved in \cite{Nastasescue} that $R_e$ is a subring of $R$ and $1\in R_e$. Let $I$ be an ideal of a graded ring $R$. Then $I$ is said to be graded ideal if $I=\displaystyle\bigoplus_{g\in G}(I\cap R_{g})$, i.e., for $x\in I$, $x=\displaystyle\sum_{g\in G}x_{g}$ where $x_{g}\in I$ for all $g\in G$. An ideal of a graded ring need not be graded. Let $R$ be a $G$-graded ring and $I$ is a graded ideal of $R$. Then $R/I$ is $G$-graded by $\left(R/I\right)_{g}=(R_{g}+I)/I$ for all $g\in G$.

Assume that $M$ is an unital $R$-module. Then $M$ is said to be $G$-graded if $M=\displaystyle\bigoplus_{g\in G}M_{g}$ with $R_{g}M_{h}\subseteq M_{gh}$ for all $g,h\in G$ where $M_{g}$ is an additive subgroup of $M$ for all $g\in G$. The elements of $M_{g}$ are called homogeneous of degree $g$. It is clear that $M_{g}$ is an $R_{e}$-submodule of $M$ for all $g\in G$. Moreover, we set $h(M)=\displaystyle\bigcup_{g\in G}M_{g}$. Let $N$ be an $R$-submodule of a graded $R$-module $M$. Then $N$ is said to be graded $R$-submodule if $N=\displaystyle\bigoplus_{g\in G}(N\cap M_{g})$, i.e., for $x\in N$, $x=\displaystyle\sum_{g\in G}x_{g}$ where $x_{g}\in N$ for all $g\in G$. An $R$-submodule of a graded $R$-module need not be graded. Let $M$ be a $G$-graded $R$-module and $N$ be a graded $R$-submodule of $M$. Then $M/N$ is a graded $R$-module by $\left(M/N\right)_{g}=(M_{g}+N)/N$ for all $g\in G$.

\begin{lem}(\cite{Farzalipour}, Lemma 2.1)\label{1.3} Let $R$ be a $G$-graded ring and $M$ be a $G$-graded $R$-module.

\begin{enumerate}

\item If $I$ and $J$ are graded ideals of $R$, then $I+J$ and $I\bigcap J$ are graded ideals of $R$.

\item If $N$ and $K$ are graded $R$-submodules of $M$, then $N+K$ and $N\bigcap K$ are graded $R$-submodules of $M$.

\item If $N$ is a graded $R$-submodule of $M$, $r\in h(R)$, $x\in h(M)$ and $I$ is a graded ideal of $R$, then $Rx$, $IN$ and $rN$ are graded $R$-submodules of $M$.
\end{enumerate}
\end{lem}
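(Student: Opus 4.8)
The plan is to use throughout the single characterization of gradedness supplied by the definitions above: a submodule $N\subseteq M$ is graded precisely when every homogeneous component (in the decomposition $M=\bigoplus_{g\in G}M_g$) of every element of $N$ again lies in $N$. The uniqueness of the homogeneous decomposition of an element of $M$ is the one structural fact I will invoke repeatedly, together with the grading compatibility $R_gM_h\subseteq M_{gh}$.

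For part~(2) I would argue directly with components. Given $x\in N+K$, write $x=n+k$ with $n\in N$, $k\in K$, and expand $n=\sum_g n_g$, $k=\sum_g k_g$ into homogeneous pieces. Since $N$ and $K$ are graded, $n_g\in N$ and $k_g\in K$; by uniqueness the degree-$g$ component of $x$ is exactly $n_g+k_g\in N+K$, so $N+K$ is graded. For $N\cap K$, take $x\in N\cap K$ with components $x_g$; gradedness of $N$ and of $K$ forces $x_g\in N$ and $x_g\in K$, hence $x_g\in N\cap K$. Part~(1) is then the special case $M=R$, since a graded ideal is exactly a graded submodule of $R$ regarded as a module over itself, so no separate argument is needed.

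For part~(3) I would first isolate the reusable observation that any submodule generated by homogeneous elements is graded: if $L=\sum_i Ry_i$ with each $y_i$ homogeneous, then expanding $r_i=\sum_g r_{i,g}$ in any relation $z=\sum_i r_i y_i$ shows that each homogeneous component of $z$ is a sum of terms $r_{i,g}y_i\in Ry_i\subseteq L$, hence lies in $L$. The three claims follow quickly. For $Rx$ with $x\in M_\lambda$, the element $rx$ has degree-$g\lambda$ component $r_gx\in R_gM_\lambda\subseteq M_{g\lambda}$, which lies in $Rx$; as $g\mapsto g\lambda$ is a bijection of $G$, this accounts for every component. For $rN$ with $r\in R_\mu$ the identical computation gives the degree-$\mu g$ component of $rn$ equal to $rn_g\in rN$, where $n_g\in N$ by gradedness of $N$. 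For $IN$, gradedness of $I$ and $N$ means $IN$ is generated by the homogeneous products $a_gx_h$ with $a_g\in I\cap R_g$ and $x_h\in N\cap M_h$, each lying in $M_{gh}$, so the reusable observation applies.

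I expect no genuine obstacle here; the only point demanding care is $IN$, where one must pass from arbitrary sums $\sum_i a_ix_i$ to homogeneous generators before the component argument becomes clean. Handling this through the auxiliary statement on homogeneous generators keeps the bookkeeping uniform across all three cases.
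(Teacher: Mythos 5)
Your proof is correct, and the care you take with $IN$ (reducing to homogeneous generators before running the component argument) is exactly the point that needs it. Note that the paper itself offers no proof of this lemma---it is imported verbatim from \cite{Farzalipour}, Lemma 2.1---and your argument, resting on uniqueness of the homogeneous decomposition together with the auxiliary fact that a submodule generated by homogeneous elements is graded, is the standard one for these statements.
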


If $N$ is a graded $R$-submodule of $M$, then $Ann_{R}(N)=\left\{r\in R:rN=\{0\}\right\}$ is a graded ideal of $R$ (see \cite{Farzalipour2}), and $(N:_{R}M)=\left\{r\in R:rM\subseteq N\right\}$ is a graded ideal of $R$ (see \cite{Atani}).

 The concept of graded prime submodules is one of the pillar stones of the theory of graded modules. For years, there have been many studies and generalizations on this notions. See, for example, \cite{Dawwas Zoubi Bataineh}, \cite{Atani}, \cite{Atani Farzalipour}, \cite{Farzalipour}, and \cite{Oral Tekir Agargun}.  Recall from \cite{Atani} that a graded prime $R$-submodule is a proper graded $R$-submodule $N$ of $M$ having the property that $rm\in N$ implies $r\in (N :_{R} M)$ or $m\in N$ for each $r\in h(R)$ and $m\in h(M)$.  Here we denote the set of all graded prime $R$-submodules by $GSpec(_{R}M)$, in particular, we write $GSpec(R)$ to express the set of all graded prime ideals of $R$.
 Also, A graded $R$-module $M$ is called a graded multiplication $R$-module if $N = (N :_{R} M)M$ for every graded $R$-submodule $N$ of $M$ (see \cite{Escoriza}). If the only graded $R$-submodules of $M$ are $\{0\}$ and $M$ itself, then we call $M$ a graded simple $R$-module (see \cite{Nastasescue}).

  The aim of this article is to introduce the notion of  graded $s$-prime submodules, which is in a certain sense, the closest extension of  the class of graded prime submodules.
  Recall that a subset $S$ of $R$ is called a multiplicatively closed subset (briefly, m.c.s.) of $R$ if:
     \begin{enumerate}
       \item $0\notin S$, and  $1\in S$,
       \item $ st\in S$ for all $s, t\in S$,
     \end{enumerate}. (see \cite{Wang}). Note that $S_{I} = h(R)-I$ is a m.c.s. of $R$ for every $I\in GSpec(R)$.
   The concept of $S$-prime submodules was introduced in \cite{Sevim}, where a submodule $N$ of an  $R$-module $M$ is called  $S$-prime if  $(N :_{R} M)\bigcap S = \emptyset$, and  there exists $s\in S$ such that whenever $rm\in N$ then  $sr\in (N :_{R} M)$ or $sm\in N$ for each $r\in R$ and $m\in h(M)$.  Our original goal was to investigate this notion, as it is, in the setting of graded modules. But we noticed that the core of this new concept which is build on the following property:
  \begin{center}
  If $rm\in N$ then  $sr\in (N :_{R} M)$ or $sm\in N$ for each $r\in h(R)$ and $m\in h(M)$,
  \end{center}
    only requires the existence of the element $s$ to establish most of the results obtained in \cite{Sevim}, and  not the whole set $S$.
   In addition, we found that imposing the condition  on $S$ to be a m.c.s. is a tight assumption,  as it is shown in the following example, causing the exclusion of a wide range of submodules.
   \begin{exa}
     Let $T$ be an integral domain, and $R=T[X]/<X^2>$. Then zero ideal in $R$ can not be an $S$-prime for m.c.s. $S$ in $R$. Otherwise, if $\overline{X}$ denotes the class of $X$ in $R$, then $\overline{X}\overline{X}= \overline{0}$, and so there exists  $s=\overline{t}\in S$ such that $\overline{t}\overline{X}= \overline{0}$, i.e.;  $t X\in <X^2> $. Hence $t \in <X> $, and therefore  $s^2=\overline{t^2}=0$, which implies that $0\in S$, a contradiction.
   \end{exa}
    Also, from another point of view, we are looking to introduce a concept which is, in a certain sense, the closest extension of the class of graded prime submodules, meaning coming with a notion encompassing  submodules failing to be prime, but are "almost prime", such as $<X^2>$ in $R[X]$.\\

     Based on these remarks, and other technical details, which will be more clear  throughout the proofs of our results, we choose to define the new idea of graded $s$-prime submodules. More precisely, let   $s\in h^*(R)$ be a nonzero homogenous element of $R$, and  $M$ be a graded $R$-module. Then a graded $R$-submodule $N$ of $M$  is said to be a gr`aded $s$-prime $R$-submodule if  $s\notin(N :_{R} M)$,  and whenever $rm\in N$ then
  $sr\in (N :_{R} M)$ or $sm\in N$ for each $r\in h(R)$ and $m\in h(M)$. In particular, a graded ideal $I$ of $R$ is called a graded $s$-prime ideal if $I$ is a graded $s$-prime $R$-submodule of the $R$-module $R$.
  The set of all graded $s$-prime $R$-submodules is denoted by $GSpec_s(_{R}M)$, and we write $GSpec_s(R)$ to express the set of all graded $s$-prime ideals of $R$. Notice that, every graded prime $R$-submodule of $M$ is graded $s$-prime for each homogenous element $s\notin(N :_{R} M)$, however, the converse is not true in general, see Example \ref{Example 2.3}.\\

  When working in the non-graded case, meaning $G=\{e\}$ is the trivial group, one get  the notion of $s$-prime $R$-submodules, which is clearly a generalization of the concept of  $S$-prime submodules studied  in \cite{Sevim}. To prove the efficiency of our new idea,  Sections \ref{sec 1} and \ref{sec 2} are devoted to recover most of the results in \cite{Sevim}, not only in the classical case but in the full generality of the graded case. Among several results, we  prove that if $N\in GSpec_{S}(_{R}M)$, then $(N :_{R} M)\in GSpec_{S}(R)$, and the converse is not true in general, see  Example \ref{7}. But we show that converse holds in the interesting case of  graded multiplication $R$-module (Proposition \ref{Proposition 2.9}). Also,  we study the behavior of graded $s$-prime submodules with respect to graded homomorphisms, localization of graded modules, direct product, and idealization.\\

  Section \ref{sec 3} is considered as the main part of this article, where we tackle the problem of existence of $s$-prime modules. We succeeded to prove that if $M$ is a graded-Noetherian $R$-module,  then every graded $R$-submodule $N$ of $M$ is $s$-prime for some $s\in h^*(R)$, see Theorem \ref{maximal-2}.  Applying this to the case of trivial grading, we find that for a commutative ring  $R$ with a nonzero unity $1$, and a Noetherian $R$-module $M$, then for any $R$-submodule  $N$ of $M$ there exists $s\in R$ such that $N\in Spec_s(_{R}M)$, here $Spec_s(_{R}M)$ stands for  the set of all $s$-prime submodules of $M$, see Theorem\ref{maximal-2}.
  These are  a consequence of a more general result relating the existence $s$-prime modules to the maximality, with respect to inclusion, of the graded $R$-submodule $(N:_{M}t)=\left\{m\in M: tm\in N\right\}$ of $M$ in the  set $\mathfrak{D}_N=\{(N :_{M} t),\,  s\in  h^*(R)\setminus (N :_{R} M)\}$, see Theorem\ref{maximal}. Similar results are provided in the case of grading by $\mathbb{Z}$,  a finite group, or polycyclic-by-finite group. \\
  
  In the last section we treat the interesting case when  $(R,G)$ is a crossed product grading.  We succeeded to prove that if  $I=\displaystyle\bigoplus_{g\in G}I_{g}$  is a graded ideal of $R$, then  $I \in GSpec_{s}(R)$, for some $s\in h^*(R)$, if and only if $I_e \in GSpec_{t}(R)$, for some $t\in R_e$, see Theorem \ref{crossed-product}. In particular, if  $G$ is any group,  and $R$   is a Noatherian commutative ring with a nonzero unity $1$, then any  graded ideal of $R[G]$ is $s$-prime  for some $s\in h^*(R)$, see Theorem \ref{group ring}.

\section{Graded $s$-Prime Submodules}\label{sec 1}

In this section, we introduce and study the concept of graded $s$-prime submodules.

\begin{defn}Let $s\in h^*(R)$ be a nonzero homogenous element of $R$, and let $M$ be a graded unital $R$-module. A graded $R$-submodule $N$ of $M$  is said to be a graded $s$-prime $R$-submodule if  $s\notin(N :_{R} M)$,  and whenever $rm\in N$ then  $sr\in (N :_{R} M)$ or $sm\in N$ for each $r\in h(R)$ and $m\in h(M)$.
\end{defn}

As a direct result, we have:
\begin{lem}\label{direct} Let $M$ be a $G$-graded $R$-module,  $N$ be a graded $R$-submodule of $M$, and $s\in h^*(R)$.
\begin{enumerate}
\item If $N\in GSpec(_{R}M)$,  then $N\in GSpec_{s}(_{R}M)$.\\

\item For $t\in HU(R)$, we have $N\in GSpec_s(_{R}M)$ if and only if $N\in GSpec_{ts}(_{R}M)$.

\end{enumerate}

\end{lem}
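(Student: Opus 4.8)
The plan is to unpack the definition of graded $s$-prime submodule directly and verify each part mechanically. The statement has two items, and I would treat them separately.

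For part (1), suppose $N \in GSpec({}_R M)$ is a graded prime submodule. Recall the defining property: whenever $rm \in N$ with $r \in h(R)$ and $m \in h(M)$, we have $r \in (N:_R M)$ or $m \in N$. I would first note the implicit hypothesis carried by the lemma's phrasing: the conclusion $N \in GSpec_s({}_R M)$ can only hold when $s \notin (N:_R M)$, so the honest reading (consistent with the introductory remark in the excerpt, which says ``every graded prime $R$-submodule of $M$ is graded $s$-prime for each homogeneous element $s \notin (N:_R M)$'') is that $s$ is taken outside $(N:_R M)$. Granting that, the verification is immediate: if $rm \in N$, then by graded primeness $r \in (N:_R M)$ or $m \in N$; since $(N:_R M)$ is a graded ideal and $s \in h(R)$, in the first case $sr \in (N:_R M)$, and in the second case $sm \in N$ because $N$ is a graded submodule closed under multiplication by homogeneous ring elements. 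Together with $s \notin (N:_R M)$, this is exactly the graded $s$-prime condition.

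For part (2), $t \in HU(R)$ denotes a homogeneous unit (this notation, though not spelled out in the excerpt, is standard: $HU(R)$ is the group of homogeneous units, so $t \in h^*(R)$ and $t^{-1} \in h^*(R)$). The key observation is that multiplication by the homogeneous unit $t$ is a bijection on $h(R)$ and on $h(M)$ preserving both the graded ideal $(N:_R M)$ and the graded submodule $N$ in the sense that $x \in (N:_R M) \iff tx \in (N:_R M)$, and likewise $y \in N \iff ty \in N$ for homogeneous $x,y$. I would prove the equivalence by showing each direction. For the forward direction, assume $N \in GSpec_s({}_R M)$ and take $rm \in N$; then there is the $s$-prime dichotomy $sr \in (N:_R M)$ or $sm \in N$, and multiplying through by the unit $t$ gives $(ts)r \in (N:_R M)$ or $(ts)m \in N$, which is precisely the $ts$-prime condition. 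I must also check the non-membership clause: $s \notin (N:_R M)$ implies $ts \notin (N:_R M)$ because $t$ is a unit, so if $ts$ were in the graded ideal $(N:_R M)$ then $s = t^{-1}(ts)$ would be too. The reverse direction is symmetric, using $t^{-1} \in HU(R)$ to recover $s = t^{-1}(ts)$ from $ts$.

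I do not expect any genuine obstacle here, as the statement is labelled a direct consequence of the definition and both parts are short formal verifications. The only points requiring care are bookkeeping rather than mathematical depth: in part (1) one must make the hypothesis $s \notin (N:_R M)$ explicit rather than assume it for granted, and in part (2) one must confirm that multiplication by a homogeneous unit respects homogeneity (so that $tr$, $ts$, etc. remain in $h(R)$ and the graded conditions still apply) and that it reflects membership in graded ideals and submodules. Both facts follow from $R_g R_h \subseteq R_{gh}$ and the assumption that $t$ and $t^{-1}$ are homogeneous, so the argument closes cleanly.
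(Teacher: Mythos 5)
Your proof is correct and takes exactly the route the paper intends: the lemma is stated there without proof as ``a direct result,'' and your mechanical unpacking of the definition of graded $s$-prime submodule, using that $(N:_R M)$ is an ideal and $N$ a submodule for part (1) and that a homogeneous unit $t$ has homogeneous inverse reflecting membership in $(N:_R M)$ and $N$ for part (2), is the verification the authors leave to the reader. Your observation that part (1) only holds under the implicit hypothesis $s \notin (N:_R M)$ --- consistent with the introduction's phrasing ``graded $s$-prime for each homogeneous element $s\notin(N:_R M)$'' --- is a legitimate and necessary point of care, since otherwise the statement as written is literally false.
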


The next example shows that the converse of Lemma \ref{direct} (1) is not true in general.

\begin{exa}\label{Example 2.3}Consider $R=\mathbb{Z}$ and $G=\mathbb{Z}_{2}$. Then $R$ is trivially $G$-graded by $R_{0}=\mathbb{Z}$ and $R_{1}=\{0\}$. Consider the $R$-modules $T=\mathbb{Z}[i]$ and $L=\mathbb{Z}_{2}[i]$. Then $T$ and $L$ are $G$-graded by $T_{0}=\mathbb{Z}$, $T_{1}=i\mathbb{Z}$, $L_{0}=\mathbb{Z}_{2}$ and $L_{1}=i\mathbb{Z}_{2}$. So, $M=T\times L$ is a $G$-graded $R$-module where $M_{0}=T_{0}\times L_{0}$ and $M_{1}=T_{1}\times L_{1}$. Now, $N=\{0\}\times\{0\}$ is a graded $R$-submodule of $M$ with $(N:_{R}M)=\{0\}$. If we put $s=2$, then $s\in h^*(R)$. Now we show that $N\in GSpec_{s}(_{R}M)$. Let $r\in h(R)$ and $m\in h(M)$ such that $rm\in N$.

\underline{\textbf{Case 1}:} If $m\in M_{0}$, then $m=(t_{0}, l_{0})$ for some $t_{0}\in \mathbb{Z}$ and $l_{0}\in \mathbb{Z}_{2}$, and then $rm=(rt_{0}, \overline{rl_{0}})\in N$, which implies that $rt_{0}=0$ and $\overline{rl_{0}}=0$. If $r=0$, then we are done. Assume that $t_{0}=0$. Then $sm=2(t_{0}, l_{0})\in N$. Hence, $N\in GSpec_{s}(_{R}M)$.

\underline{\textbf{Case 2}:} If $m\in M_{1}$, then $m=(t_{1}, l_{1})$ for some $t_{1}\in i\mathbb{Z}$ and $l_{1}\in i\mathbb{Z}_{2}$, and then $t_{1}=ia$ and $l_{1}=ib$ for some $a\in \mathbb{Z}$ and $b\in \mathbb{Z}_{2}$. So, $rm=(ira, i\overline{rb})\in N$, which implies that $ra=0$ and $\overline{rb}=0$. Hence, by Case (1), $N\in GSpec_{s}(_{R}M)$.

On the other hand, $2\in h(R)$ and $(0, \overline{1})\in h(M)$ such that $2(0, \overline{1})\in N$, but $2\notin (N:_{R}M)$ and $(0, \overline{1})\notin N$. Hence, $N$ is not graded prime $R$-submodule of $M$.
\end{exa}
$ $
\begin{prop}\label{Lemma 2.5}Let $M$ be a $G$-graded $R$-module, $N$ be a graded $R$-submodule of $M$ and $s\in h^*(R)$. Then $N\in GSpec_{s}(_{R}M)$ if and only if $IK\subseteq N$ implies $sI\subseteq (N :_{R} M)$ or $sK\subseteq N$ for each graded ideal $I$ of $R$ and graded $R$-submodule $K$ of $M$.
\end{prop}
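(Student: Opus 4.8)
The plan is to prove both directions of the equivalence. The reverse direction is trivial: given the ideal-theoretic condition, I would specialize to principal objects. If $rm\in N$ for homogeneous $r\in h(R)$ and $m\in h(M)$, then by Lemma~\ref{1.3}(3) both $Rr$ and $Rm$ are graded (the former a graded ideal, the latter a graded $R$-submodule), and $(Rr)(Rm)\subseteq N$. Applying the hypothesis gives $sRr\subseteq (N:_R M)$ or $sRm\subseteq N$; since $r\in Rr$ and $m\in Rm$, this yields $sr\in(N:_R M)$ or $sm\in N$, which is exactly the defining property of $N\in GSpec_s({}_R M)$. (The condition $s\notin(N:_R M)$ is part of the definition of $GSpec_s$ and carries over directly.)

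For the forward direction, assume $N\in GSpec_s({}_R M)$ and suppose $IK\subseteq N$ for a graded ideal $I$ and graded $R$-submodule $K$. I want to show $sI\subseteq(N:_R M)$ or $sK\subseteq N$. The natural strategy is a contrapositive / ``pick a witness'' argument: suppose $sK\not\subseteq N$, and aim to show $sI\subseteq(N:_R M)$. Since $K$ is graded, $sK\not\subseteq N$ forces the existence of a homogeneous $k\in K\cap h(M)$ with $sk\notin N$. Then for every homogeneous $a\in I\cap h(R)$ we have $ak\in IK\subseteq N$, so the $s$-prime property applied to the pair $(a,k)$ gives $sa\in(N:_R M)$ or $sk\in N$; the second alternative is excluded by our choice of $k$, so $sa\in(N:_R M)$ for every homogeneous $a\in I$. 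Because $I$ is graded, it is additively generated by its homogeneous elements, and $(N:_R M)$ is a graded ideal (hence closed under addition of such contributions), so $sI\subseteq(N:_R M)$, as required.

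The one delicate point — and the main obstacle to watch — is that the single witness $s$ must work simultaneously against \emph{all} homogeneous $a\in I$; the argument above handles this correctly because I fix one bad homogeneous element $k$ of $K$ first and then range over $a$, rather than choosing a separate $s$ or $k$ for each $a$. This is exactly where the reformulation in terms of a fixed $s\in h^*(R)$ (rather than an ambient multiplicatively closed set) is convenient: the same $s$ from the hypothesis $N\in GSpec_s({}_R M)$ is reused throughout, with no need to take products. I would also note that the homogeneous decomposition step relies on $I$ and $K$ being graded, which is built into the statement, and on $(N:_R M)$ being graded, which was recorded in the introduction; without these the passage from homogeneous generators to the whole of $I$ (resp. the identification of a homogeneous witness in $K$) would fail, so I would flag that these gradedness facts are the only nontrivial inputs.
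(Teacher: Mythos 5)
Your proposal is correct and follows essentially the same route as the paper: the converse by specializing to the principal graded objects $Rr$ and $Rm$, and the forward direction by fixing a homogeneous witness $k\in K$ with $sk\notin N$ (extracted via the graded decomposition) and then ranging over the homogeneous components of an arbitrary element of $I$. The only cosmetic difference is that the paper decomposes a general $r\in I$ into components $r_h$ and sums the conclusions, whereas you phrase the same step as $I$ being additively generated by its homogeneous elements; the content is identical.
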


\begin{proof}Suppose that $N\in GSpec_{s}(_{R}M)$. Suppose that $IK\subseteq N$ for some graded ideal $I$ of $R$ and graded $R$-submodule $K$ of $M$.. Assume that $sK\nsubseteq N$. Then there exists $k\in K$ such that $sk\notin N$, and then there exists $g\in G$ such that $sk_{g}\notin N$. Note that, $k_{g}\in K$ as $K$ is graded submodule. Let $r\in I$. Then $r_{h}\in I$ for all $h\in G$ as $I$ is graded ideal. Now, $r_{h}k_{g}\in IK\subseteq N$ for all $h\in G$. Since $N\in GSpec_{s}(_{R}M)$ and $sk_{g}\notin N$, we have $sr_{h}\in (N:_{R}M)$ for all $h\in G$, and then $sr\in (N:_{R}M)$. Hence, $sI\subseteq(N:_{R}M)$.
Conversely, let $r\in h(R)$ and $m\in h(M)$ with $rm\in N$. Now, $I = Rr$ is a graded ideal of $R$ and $K = Rm$ is a graded $R$-submodule of $M$ such that $IK\subseteq N$. Then by assumption, $sI\subseteq (N :_{R} M)$ or $sK\subseteq N$, and so either $sr\in (N :_{R} M)$ or $sm\in N$. Therefore, $N\in GSpec_{s}(_{R}M)$.
\end{proof}

\begin{cor}\label{Corollary 2.6}Let $R$ be a graded ring, $P$ be a graded ideal of $R$ and $s\in h^*(R)$. Then $P\in GSpec_{s}(R)$ if and only if $IJ\subseteq P$ implies $sI\subseteq P$ or $sJ\subseteq P$ for each graded ideals $I$ and $J$ of $R$.
\end{cor}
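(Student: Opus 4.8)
The plan is to obtain this corollary as the special case $M = R$ of Proposition \ref{Lemma 2.5}. First I would invoke the definition: a graded ideal $P$ is declared to be graded $s$-prime precisely when $P$ is a graded $s$-prime $R$-submodule of the $R$-module $R$, so that $P \in GSpec_s(R)$ is literally the assertion $P \in GSpec_s(_R R)$. Thus Proposition \ref{Lemma 2.5}, applied with $M = R$ and $N = P$, is available verbatim, and the task reduces to translating its hypotheses and conclusion into the language of ideals.

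The one computation to record is the identification $(P :_R R) = P$. Since $1 \in R_e \subseteq h(R)$, any $r$ with $rR \subseteq P$ satisfies $r = r\cdot 1 \in P$, giving $(P :_R R) \subseteq P$; conversely, if $r \in P$ then $rR \subseteq P$ because $P$ is an ideal, so $P \subseteq (P :_R R)$. Hence $(P :_R R) = P$, and the clause $sI \subseteq (N :_R M)$ appearing in Proposition \ref{Lemma 2.5} becomes simply $sI \subseteq P$.

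It remains to note that the graded $R$-submodules of the $R$-module $R$ are exactly the graded ideals of $R$. Consequently, in the statement of Proposition \ref{Lemma 2.5} the graded submodule $K$ ranges precisely over the graded ideals $J$ of $R$, the containment $IK \subseteq N$ reads $IJ \subseteq P$, and the alternative $sK \subseteq N$ reads $sJ \subseteq P$. Substituting these identifications into Proposition \ref{Lemma 2.5} yields exactly the claimed equivalence. There is no substantive obstacle here; the only points requiring a line of justification are the identity $(P :_R R) = P$ and the observation that submodules of $R$ coincide with ideals, after which the result is a direct specialization.
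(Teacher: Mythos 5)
Your proposal is correct and matches the paper's intent exactly: the corollary is stated without proof precisely because it is the specialization $M=R$, $N=P$ of Proposition \ref{Lemma 2.5}, using $(P:_R R)=P$ and the identification of graded $R$-submodules of $R$ with graded ideals. Nothing further is needed.
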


\begin{prop}\label{Proposition 2.9} Let $M$ be a graded $R$-module and $s\in h^*(R)$.
 If $N\in GSpec_{s}(_{R}M)$, then $(N :_{R} M)\in GSpec_{s}(R)$.

\end{prop}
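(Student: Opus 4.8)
The plan is to set $P=(N:_{R}M)$ and verify directly the two requirements in the definition of a graded $s$-prime ideal of $R$, namely that $s\notin(P:_{R}R)$ and that the multiplicative splitting property holds for $P$. The first requirement is essentially free: for any ideal $P$ we have $(P:_{R}R)=\{r\in R:rR\subseteq P\}=P$, since $1\in R$ forces $rR\subseteq P\Leftrightarrow r\in P$. Hence $s\notin(P:_{R}R)$ is equivalent to $s\notin(N:_{R}M)$, which is exactly the hypothesis $N\in GSpec_{s}({}_{R}M)$.

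For the substantive part I would avoid an element-by-element argument and instead reduce to the ideal-theoretic characterization already proved. By Corollary \ref{Corollary 2.6}, to show $P\in GSpec_{s}(R)$ it suffices to prove that for all graded ideals $I,J$ of $R$, if $IJ\subseteq P$ then $sI\subseteq P$ or $sJ\subseteq P$. So I would start from graded ideals $I,J$ with $IJ\subseteq P=(N:_{R}M)$, which unwinds to $I(JM)\subseteq N$ inside $M$.

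The key observation is that $JM$ is itself a graded $R$-submodule of $M$: applying Lemma \ref{1.3}(3) with the whole module $M$ (a graded submodule of itself) and the graded ideal $J$ shows $JM$ is graded. Then $I$ is a graded ideal, $JM$ is a graded $R$-submodule, and $I(JM)\subseteq N$, so I can feed this into Proposition \ref{Lemma 2.5} for the graded $s$-prime submodule $N$. This yields either $sI\subseteq(N:_{R}M)=P$, which is one of the desired conclusions outright, or $s(JM)\subseteq N$; and the latter says $(sJ)M\subseteq N$, i.e. $sJ\subseteq(N:_{R}M)=P$. In both cases the required dichotomy for $P$ holds, and Corollary \ref{Corollary 2.6} delivers $P\in GSpec_{s}(R)$.

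There is no genuinely hard step here; the argument is a clean transfer through the "$IK\subseteq N$" formulation of Proposition \ref{Lemma 2.5}. The only point demanding a little care is the bookkeeping that $JM$ is graded (so that Proposition \ref{Lemma 2.5} applies) and the translation $s(JM)\subseteq N\Leftrightarrow sJ\subseteq(N:_{R}M)$, which is immediate once one notes $sJ$ is again a graded ideal by Lemma \ref{1.3}. I expect the mild subtlety, if any, to be simply remembering to discharge the nondegeneracy condition $s\notin(P:_{R}R)$ rather than only the splitting property, since it is easy to overlook that first clause of the definition.
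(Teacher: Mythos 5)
Your proof is correct, and it is logically sound: there is no circularity, since Proposition \ref{Lemma 2.5} and Corollary \ref{Corollary 2.6} both precede this statement and neither depends on it. However, your route differs from the paper's. The paper argues element-wise: given $a,b\in h(R)$ with $ab\in(N:_{R}M)$, it applies the defining property of $N\in GSpec_{s}({}_{R}M)$ to the pair $a$ and $bm$ for each $m\in h(M)$; the crucial observation is that the alternative $sa\in(N:_{R}M)$ does not depend on $m$, so if it fails then $sbm\in N$ for \emph{every} homogeneous $m$, and a homogeneous-decomposition argument then upgrades this to $sbt\in N$ for all $t\in M$, i.e.\ $sb\in(N:_{R}M)$. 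You instead transfer the whole problem through the ideal-theoretic characterization: from $IJ\subseteq(N:_{R}M)$ you form the graded submodule $JM$ (legitimately, via Lemma \ref{1.3}(3) with $M$ as a graded submodule of itself), feed $I(JM)\subseteq N$ into Proposition \ref{Lemma 2.5}, and translate $s(JM)\subseteq N$ back into $sJ\subseteq(N:_{R}M)$. Your version is shorter and pushes all the homogeneous-component bookkeeping into the already-proved Proposition \ref{Lemma 2.5}, whereas the paper's version is self-contained and makes the uniformity-in-$m$ mechanism visible. You were also right to flag the nondegeneracy clause $s\notin\bigl((N:_{R}M):_{R}R\bigr)=(N:_{R}M)$: the paper's proof leaves it implicit, and indeed Corollary \ref{Corollary 2.6} as stated silently needs $s\notin P$ for its ``if'' direction (when $s\in P$ the splitting condition holds trivially yet $P\notin GSpec_{s}(R)$), so your explicit check is the careful thing to do.
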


\begin{proof}
 By (\cite{Atani}, Lemma 2.1), $(N:_{R}M)$ is a graded ideal of $R$. Let $ab\in (N :_{R} M)$ for some $a, b\in h(R)$. Then we have $ab\in h(R)$ and $abm\in N$ for all $m\in h(M)$. If $sa\in (N :_{R} M)$, there is nothing to prove. Suppose that $sa\notin (N :_{R} M)$. Since $N\in GSpec_{s}(_{R}M)$, $sbm\in N$ for all $m\in h(M)$. Now, Let $t\in M$ and suppose that  $t=\displaystyle\sum_{g\in G}t_{g}$ where $t_{g}\in M_{g}$ for all $g\in G$, and then $sbt_{g}\in N$ for all $g\in G$, which implies that $sbt=sb\left(\displaystyle\sum_{g\in G}t_{g}\right)=\displaystyle\sum_{g\in G}sbt_{g}\in N$, so that $sb\in (N :_{R} M)$. Therefore, $(N :_{R} M)\in GSpec_{s}(R)$.
\end{proof}

 The next example shows that the converse of the above result is not true in general.

\begin{exa}\label{7}Consider $R=\mathbb{Z}$ and $G=\mathbb{Z}_{4}$. Then $R$ is trivially $G$-graded by $R_{0}=\mathbb{Z}$ and $R_{1}=R_{2}=R_{3}=\{0\}$. Consider the $R$-module $T=\mathbb{Z}[i]$. Then $T$ is $G$-graded by $T_{0}=\mathbb{Z}$, $T_{2}=i\mathbb{Z}$ and $T_{1}=T_{3}=\{0\}$. So, $M=T\times T$ is a $G$-graded $R$-module where $M_{g}=T_{g}\times T_{g}$ for all $g\in G$. Choose $m=(2, 0)\in h(M)$, then $N=Rm$ is a graded $R$-submodule of $M$ with $(N:_{R}M)=\{0\}\in GSpec(R)$. On the other hand, $N\notin GSpec(_{R}M)$ since $2\in h(R)$ and $(3, 0)\in h(M)$ such that $2(3, 0)\in N$, but $2\notin (N:_{R}M)$ and $(3, 0)\notin N$.
\end{exa}

 The case of graded multiplication $R$-modules is of special interest since we get a positive answer for the converse. Indeed, we have:

\begin{prop}\label{Proposition 2.10}Let $M$ be a graded multiplication $R$-module and $s\in h^*(R)$.
 If  $(N :_{R} M)\in GSpec_{s}(R)$, then $N\in GSpec_{s}(_{R}M)$.
\end{prop}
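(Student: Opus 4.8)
The plan is to reduce everything to the ideal-theoretic characterizations already established, namely Proposition \ref{Lemma 2.5} (the ``$IK\subseteq N$'' criterion for submodules) and Corollary \ref{Corollary 2.6} (the analogous criterion for ideals). First I would dispose of the easy condition: since $(N:_{R}M)\in GSpec_{s}(R)$ and $(N:_{R}M)$, viewed as a submodule of the module $R$, satisfies $\bigl((N:_{R}M):_{R}R\bigr)=(N:_{R}M)$, the definition of a graded $s$-prime ideal gives immediately $s\notin(N:_{R}M)$. So the first defining requirement for $N\in GSpec_{s}(_{R}M)$ holds for free.

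For the substantive part I would invoke Proposition \ref{Lemma 2.5}: it suffices to show that whenever $IK\subseteq N$ for a graded ideal $I$ of $R$ and a graded $R$-submodule $K$ of $M$, then $sI\subseteq(N:_{R}M)$ or $sK\subseteq N$. Here is where the multiplication hypothesis enters. Setting $J=(K:_{R}M)$, which is a graded ideal by the remark following Lemma \ref{1.3}, the multiplication property yields $K=(K:_{R}M)M=JM$. Consequently $IK=IJM\subseteq N$, which translates precisely into the containment $IJ\subseteq(N:_{R}M)$ of graded ideals.

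Now I would apply Corollary \ref{Corollary 2.6} to the graded $s$-prime ideal $(N:_{R}M)$: from $IJ\subseteq(N:_{R}M)$ we obtain $sI\subseteq(N:_{R}M)$ or $sJ\subseteq(N:_{R}M)$. In the first case we are already done. In the second case, $sJ\subseteq(N:_{R}M)$ means $sJM\subseteq N$, and since $JM=(K:_{R}M)M=K$ by multiplication, this reads $sK\subseteq N$. Either way the disjunction required by Proposition \ref{Lemma 2.5} is satisfied, so $N\in GSpec_{s}(_{R}M)$.

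The only delicate point I expect is the bookkeeping in passing between submodules and their colon ideals: one must use the multiplication property in both directions (to write $K=JM$ at the start and to convert $sJM\subseteq N$ back into $sK\subseteq N$ at the end), and one must confirm that $IJM\subseteq N$ is genuinely equivalent to $IJ\subseteq(N:_{R}M)$. These are routine once the multiplication identity $K=(K:_{R}M)M$ is in hand, so the argument is essentially a clean application of the two characterizations with the multiplication hypothesis as the bridge.
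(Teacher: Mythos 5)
Your proof is correct and follows essentially the same route as the paper's: both reduce the claim to Corollary \ref{Corollary 2.6} applied to $I$ and $(K:_{R}M)$ and then conclude via Proposition \ref{Lemma 2.5}, using the multiplication identity $K=(K:_{R}M)M$ to pass back to submodules. The only cosmetic difference is that the paper obtains $I(K:_{R}M)\subseteq(N:_{R}M)$ through the intermediate containment $I(K:_{R}M)\subseteq(IK:_{R}M)$ rather than by first rewriting $K$ as $(K:_{R}M)M$; your explicit check that $s\notin(N:_{R}M)$ is a welcome detail the paper leaves implicit.
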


\begin{proof}
Let $I$ be a graded ideal of $R$ and $K$ be a graded $R$-submodule of $M$ with $IK\subseteq N$. Then we have that $I(K :_{R} M)\subseteq (IK :_{R} M)\subseteq (N :_{R} M)$. Since $(N :_{R} M)\in GSpec_{s}(R)$, by Corollary \ref{Corollary 2.6}, we have $sI\subseteq (N :_{R} M)$ or $s(K :_{R} M)\subseteq (N :_{R}M)$. Thus, we have that $sI\subseteq (N :_{R} M)$ or $sK = s(K :_{R} M)M\subseteq (N :_{R} M)M = N$. By Proposition \ref{Lemma 2.5}, $N\in GSpec_{s}(_{R}M)$.

\end{proof}

Recall that for a graded multiplication $R$-module $M$,  the product of two graded $R$-submodules $N, K$ of $M$ is defined by $NK = (N :_{R} M)(K :_{R} M)M$ (see \cite{Escoriza}). As a consequence of Proposition \ref{Proposition 2.9} and Proposition \ref{Lemma 2.5}, we have the following specific result.

\begin{cor}\label{Corollary 2.10}Suppose that $M$ is a graded multiplication $R$-module, $s\in h^*(R)$, and $N$ is a graded $R$-submodule of $M$ with $s\notin(N :_{R} M)$. Then $N\in GSpec_{s}(_{R}M)$ if and only for every graded $R$-submodules $L, K$ of $M$ with $LK\subseteq N$, we have $sL\subseteq N$ or $sK\subseteq N$.
\end{cor}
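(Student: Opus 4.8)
The plan is to push the whole problem down to the associated graded ideal $(N:_{R}M)$ and to exploit the two transfer results already available for multiplication modules, namely Proposition \ref{Proposition 2.9} and Proposition \ref{Proposition 2.10}, together with the ideal-level criterion of Corollary \ref{Corollary 2.6}. The bridge in both directions is the elementary observation that for a graded ideal $A$ one has $AM\subseteq N$ if and only if $A\subseteq (N:_{R}M)$, and that $s(L:_{R}M)\subseteq (N:_{R}M)$ forces $sL=s(L:_{R}M)M\subseteq (N:_{R}M)M=N$, using the multiplication identities $L=(L:_{R}M)M$ and $(N:_{R}M)M=N$.

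For the forward implication, I would assume $N\in GSpec_{s}(_{R}M)$ and invoke Proposition \ref{Proposition 2.9} to obtain $(N:_{R}M)\in GSpec_{s}(R)$. Given graded submodules $L,K$ with $LK\subseteq N$, unfolding the product definition gives $(L:_{R}M)(K:_{R}M)M\subseteq N$, hence $(L:_{R}M)(K:_{R}M)\subseteq (N:_{R}M)$. Corollary \ref{Corollary 2.6} then yields $s(L:_{R}M)\subseteq (N:_{R}M)$ or $s(K:_{R}M)\subseteq (N:_{R}M)$, and the translation in the previous paragraph converts either alternative into $sL\subseteq N$ or $sK\subseteq N$, as required.

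For the converse, I would aim to verify the hypothesis of Proposition \ref{Proposition 2.10}, that is $(N:_{R}M)\in GSpec_{s}(R)$, and then simply read off $N\in GSpec_{s}(_{R}M)$ from that proposition. Since $s\notin (N:_{R}M)$ by assumption, Corollary \ref{Corollary 2.6} reduces this to the following: for graded ideals $I,J$ with $IJ\subseteq (N:_{R}M)$, i.e. $IJM\subseteq N$, show $sI\subseteq (N:_{R}M)$ or $sJ\subseteq (N:_{R}M)$. I would set $L=IM$ and $K=JM$, which are graded submodules of $M$ by Lemma \ref{1.3}. The crux is the identity $LK=IM\cdot JM=IJM$; once it is in place, $LK\subseteq N$ and the standing hypothesis give $sIM\subseteq N$ or $sJM\subseteq N$, that is $sI\subseteq (N:_{R}M)$ or $sJ\subseteq (N:_{R}M)$, exactly what Corollary \ref{Corollary 2.6} needs.

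The main obstacle is therefore the identity $IM\cdot JM=IJM$. The inclusion $IJM\subseteq (IM:_{R}M)(JM:_{R}M)M$ is immediate from $I\subseteq (IM:_{R}M)$ and $J\subseteq (JM:_{R}M)$; the delicate inclusion is $(IM:_{R}M)(JM:_{R}M)M\subseteq IJM$, which I expect to prove by a direct element chase: if $aM\subseteq IM$ and $bM\subseteq JM$, then for $m\in M$ one writes $bm$ as a finite $J$-combination of elements of $M$, applies $a$ to each summand, rewrites each $a$-image as an $I$-combination, and concludes that $abm$ lies in $IJM$. Notably this computation does not use the multiplication hypothesis at all, so the multiplication assumption enters the argument only through the identities $L=(L:_{R}M)M$ and $(N:_{R}M)M=N$ invoked above.
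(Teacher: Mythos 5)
Your proof is correct and follows essentially the route the paper indicates (the corollary is stated there as a consequence of Proposition \ref{Proposition 2.9} and Proposition \ref{Lemma 2.5}): you pass everything to the ideal $(N:_{R}M)$ via Propositions \ref{Proposition 2.9} and \ref{Proposition 2.10} together with Corollary \ref{Corollary 2.6}, and your element chase establishing $IM\cdot JM=IJM$ is valid. The only remark worth making is that this identity also drops out in one line from the multiplication identities $(IM:_{R}M)M=IM$ and $(JM:_{R}M)M=JM$, namely $(IM:_{R}M)(JM:_{R}M)M=(IM:_{R}M)\,JM=J\,(IM:_{R}M)M=IJM$, so the extra generality of your chase (not using the multiplication hypothesis) is not needed here.
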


\begin{prop}\label{Proposition 2.12} Let $M$ be a graded multiplication $R$-module, and let  $N\in GSpec_{s}(_{R}M)$ for some $s\in h^*(R)$. If  $K, L$ are graded $R$-submodules of $M$ such that $K\bigcap L\subseteq N$ . Then $sK\subseteq N$ or $sL\subseteq N$.
\end{prop}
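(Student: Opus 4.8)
The plan is to reduce everything to Corollary \ref{Corollary 2.10} by showing that the product $KL$ of the two graded submodules is contained in their intersection $K\bigcap L$, and hence in $N$.

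First I would use that $M$ is a graded multiplication module, so that every graded submodule satisfies $K=(K:_{R}M)M$ and $L=(L:_{R}M)M$, and the product of graded submodules is defined by $KL=(K:_{R}M)(L:_{R}M)M$. Substituting $(L:_{R}M)M=L$ gives $KL=(K:_{R}M)L$. Since $(K:_{R}M)\subseteq R$ and $L$ is a submodule, we have $(K:_{R}M)L\subseteq L$; and since $(K:_{R}M)L\subseteq(K:_{R}M)M=K$, we obtain $KL\subseteq K\bigcap L$. This is the only genuine computation in the argument, and it is exactly where the multiplication hypothesis is essential. I would also note that $K$ and $L$ being graded guarantees (via Lemma \ref{1.3} and the remarks on colon ideals) that $KL$, $(K:_{R}M)$ and $(L:_{R}M)$ are graded, so that Corollary \ref{Corollary 2.10} applies verbatim.

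Combining the containment $KL\subseteq K\bigcap L$ with the hypothesis $K\bigcap L\subseteq N$ yields $KL\subseteq N$. Since $N\in GSpec_{s}(_{R}M)$ gives in particular $s\notin(N:_{R}M)$, the hypotheses of Corollary \ref{Corollary 2.10} are met, and applying it to the graded submodules $K$ and $L$ with $KL\subseteq N$ delivers $sK\subseteq N$ or $sL\subseteq N$, as desired.

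I expect no serious obstacle here: the proposition is essentially an immediate consequence of Corollary \ref{Corollary 2.10} once the product–intersection containment is in hand. The only point requiring care is verifying $KL\subseteq K\bigcap L$ and making sure the graded hypotheses on $K$ and $L$ are actually used, so that the cited corollary can be invoked without modification.
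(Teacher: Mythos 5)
Your proof is correct, but it takes a different route from the paper's. The paper argues directly from the definition: assuming $sL\nsubseteq N$, it picks a homogeneous component $m_g\in L$ with $sm_g\notin N$, and for each $r\in(K:_R M)$ uses $r_h m_g\in (K:_R M)L\subseteq K\bigcap L\subseteq N$ together with the graded $s$-prime property to conclude $s(K:_R M)\subseteq (N:_R M)$, whence $sK=s(K:_R M)M\subseteq (N:_R M)M=N$ by the multiplication hypothesis. You instead observe that $KL=(K:_R M)(L:_R M)M=(K:_R M)L\subseteq K\bigcap L\subseteq N$ and then invoke Corollary \ref{Corollary 2.10}. The two arguments hinge on the same containment $(K:_R M)L\subseteq K\bigcap L$ and use the multiplication hypothesis at the same essential point (rewriting a submodule as its colon ideal times $M$); the difference is that yours is modular, delegating the prime-splitting step to the already-established characterization, while the paper's is self-contained and in effect re-runs the homogeneous-component argument of Proposition \ref{Lemma 2.5} inline. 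Your version is shorter and arguably cleaner; its only cost is that it leans on Corollary \ref{Corollary 2.10}, which the paper states without a written proof, whereas the paper's direct argument stands on its own. All hypotheses you need are indeed available: $s\notin(N:_R M)$ comes from $N\in GSpec_s(_RM)$, and $K$, $L$ are graded by assumption, so the corollary applies as stated.
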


\begin{proof} Suppose that $sL\nsubseteq N$. Then $sm\notin N$ for some $m\in L$, and then there exists $g\in G$ such that $sm_{g}\notin N$, where $m_{g}\in L$ as $L$ is graded submodule. Let $r\in (K :_{R} M)$. Then $r_{h}\in(K:_{R}M)$ for all $h\in G$ as $(K:_{R}M)$ is a graded ideal, and then $r_{h}m_{g}\in (K :_{R} M)L\subseteq L\bigcap K\subseteq N$. Since $N\in GSpec_{s}(_{R}M)$ and $sm_{g}\notin N$, we have that $sr_{h}\in (N :_{R} M)$ for all $h\in G$, and then $sr\in (N:_{R}M)$, so that $s(K :_{R} M)\subseteq (N :_{R} M)$. Since $M$ is a graded multiplication $R$-module, $sK = s(K :_{R} M)M\subseteq (N :_{R} M)M =N$.
\end{proof}

Let $M$ be a graded $R$-module. Then the set of all homogeneous zero divisors of $M$ is $HZ(M)=\left\{r\in h(R):rm=0\mbox{ for some nonzero }m\in h(M)\right\}$.

\begin{thm}\label{Proposition 2.27}Let $M$ be a finitely generated graded $R$-module and $s\in h^*(R)$. If every proper graded $R$-submodule of $M$ is graded $s$-prime, then  we have:
\begin{enumerate}
\item  $HZ(M)=Ann_{R}(M)\bigcap h(R)$.\\
\item If $t$ is a homogenous element such that $t \notin HZ(M)$, then $t M= M$.
\end{enumerate}
\end{thm}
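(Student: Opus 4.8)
The plan is to first distill two facts about the fixed element $s$ from the hypothesis and then read off both parts. Throughout I assume $M\neq 0$, since otherwise the statement is vacuous ($\{0\}$ is not a proper submodule). The first step is to show that $sM=M$. Since $s\in h(R)$, Lemma \ref{1.3}(3) guarantees that $sM$ is a graded $R$-submodule of $M$. If $sM\neq M$, then $sM$ is a proper graded submodule, so by hypothesis it is graded $s$-prime and in particular $s\notin (sM:_R M)$; but the trivial inclusion $sM\subseteq sM$ says precisely that $s\in (sM:_R M)$, a contradiction. Hence $sM=M$.

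Next I would deduce that $s\notin HZ(M)$. As $M$ is finitely generated and $sM=M$, the determinant trick (Nakayama's lemma) produces $y\in R$ with $(1-sy)M=0$. If $sm=0$ for some homogeneous $m$, then $m=(1-sy)m+y(sm)=0$, so multiplication by $s$ is injective on homogeneous elements; combined with $sM=M$ this makes multiplication by $s$ a bijection of $M$.

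For part (1), the inclusion $Ann_R(M)\cap h(R)\subseteq HZ(M)$ is immediate, since a homogeneous annihilator kills any nonzero homogeneous element. For the reverse inclusion take $r\in HZ(M)$, say $rm_0=0$ with $m_0\neq 0$ homogeneous. Applying graded $s$-primeness of the proper submodule $\{0\}$ (whose residual is $Ann_R(M)$) to $rm_0=0$ gives $sr\in Ann_R(M)$ or $sm_0=0$. The second alternative contradicts $s\notin HZ(M)$, so $srM=0$; since $R$ is commutative and $sM=M$, we get $rM=r(sM)=(sr)M=0$, i.e. $r\in Ann_R(M)\cap h(R)$.

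For part (2), multiplication by $t$ is injective on $M$ because $t\notin HZ(M)$. If $tM=M$ we are done, so suppose $tM\neq M$. I expect the main obstacle to be that applying $s$-primeness to $tM$ directly is vacuous: since $sM=M$ one has $s(tM)=t(sM)=tM$, which forces $st\in(tM:_R M)$ automatically and yields no information. The remedy is to pass one step down: $t^2M\subseteq tM\subsetneq M$ is proper, hence graded $s$-prime, and Proposition \ref{Lemma 2.5} applied with $I=Rt$ and $K=tM$ (so that $IK=t^2M\subseteq t^2M$) gives $s(tM)\subseteq t^2M$ or $stM\subseteq t^2M$; using $s(tM)=tM$, both alternatives collapse to $tM\subseteq t^2M$, whence $tM=t^2M$. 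Now $t(tM)=tM$ with $tM$ finitely generated, so Nakayama gives $y$ with $(1-ty)(tM)=0$; for any $m\in M$ the element $tm$ lies in $tM$, so $tm=t(ytm)$, and injectivity of $t$ yields $m=t(ym)\in tM$. Thus $M=tM$, contradicting $tM\neq M$, and therefore $tM=M$.
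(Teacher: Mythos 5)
Your proof is correct, and its skeleton matches the paper's: both parts hinge on the observation that a proper graded submodule, being graded $s$-prime, cannot contain $s$ in its residual ideal, together with the determinant trick and (for part (2)) an application of Proposition \ref{Lemma 2.5} to $t^2M$ with $I=Rt$, $K=tM$. What you do differently is front-load the fact $sM=M$ (proper graded submodules exclude $s$ from their residual, but $s\in(sM:_R M)$ trivially) and derive $s\notin HZ(M)$ from it once and for all; the paper reaches the same two conclusions only implicitly, by separate ad hoc arguments (showing $Ann_R(m)M=M$ forces $m=0$ when $sm=0$, and showing $Ann_M(r)=M$ when $sr\in Ann_R(M)$). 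Your organization is cleaner and makes the reverse inclusion $rM=r(sM)=(sr)M=0$ in part (1) a one-liner. In part (2) the endgame also diverges: from $stM\subseteq t^2M$ the paper cancels $t$ to get $sM\subseteq tM$ and contradicts the $s$-primeness of $tM$ directly, whereas you combine it with $s(tM)=tM$ to get $tM=t^2M$ and invoke Nakayama a second time; the paper's finish is slightly shorter, but both are valid. One cosmetic point shared with the paper: $t\notin HZ(M)$ literally says $t$ kills no nonzero \emph{homogeneous} element, and the cancellation of $t$ on arbitrary elements uses that a homogeneous element permutes the degrees, so injectivity on homogeneous components gives injectivity on $M$; this is routine but worth a clause.
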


\begin{proof}
\begin{enumerate}
 \item Let $r\in HZ(M)$. Then $r\in h(R)$ and there is a nonzero $m\in M$ with $rm = 0$. Since the graded zero submodule is graded $s$-prime and $rm = 0$, we have $sr\in Ann_{R}(M)$ or $sm = 0$. If $sm = 0$, then $s\in Ann_{R}(m)$.
     Now, let $K = Ann_{R}(m)M$, then $K$ is a graded $R$-submodule of $M$ such that $s\in (K :_{R} M)$, and then $K = Ann_{R}(m)M = M$. By (\cite{Attiyah}, Corollary 2.5), $1-x\in Ann_{R}(M)\subseteq Ann_{R}(m)$ for some $x\in Ann_{R}(m)$, implying that $Ann_{R}(m) = R$ and so $m = 0$, which is a contradiction.
      Therefore, $sr\in Ann_{R}(M)$ and hence $s\in (Ann_{M}(r) :_{R} M)$. Since $Ann_{M}(r)=(0:_{M}r)$ is a graded $R$-submodule of $M$, by Lemma \ref{5}, we must have $Ann_{M}(r) = M$, i.e.; $r\in Ann_{R}(M)$. Thus, $HZ(M) = Ann_{R}(M)\bigcap h(R)$.

\item Suppose that $t$ is a homogenous element such that $t \notin HZ(M)$. If $t^2 M=M$, then  $M=t^2M \subseteq tM$, and so $t M= M$.
If $t^2 M\neq M$, then $t^2 M$ is  $s$-prime with $(t)(t M)\subseteq t^2 M$, hence by Proposition \ref{Lemma 2.5}, we have $st M\subseteq t^2 M$.
Using the fact that $t \notin HZ(M)$, we deduce that $s M \subseteq t M$, i.e.; $s\in (t M:_{R} M)$, and hence $tM=M$ as desired.
 \end{enumerate}
\end{proof}

\begin{cor}\label{cor6.1}Let $M$ be a finitely  generated graded $R$-module, $N$ be a graded $R$-submodule of $M$, and $s\in h^*(R)$. Suppose that  every proper graded $R$-submodule of $M$ is  graded $s$-prime. Then we have $N=M$ if $( N :_{R} M) \neq Ann_{R}(M)$.
\end{cor}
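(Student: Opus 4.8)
The plan is to exploit the two conclusions of Theorem \ref{Proposition 2.27}, whose hypothesis---that every proper graded $R$-submodule of $M$ is graded $s$-prime---is precisely what is assumed here. Since the theorem is available verbatim under our standing assumption, the corollary should reduce to a short extraction argument followed by two direct appeals.

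First I would record the trivial inclusion $\mathrm{Ann}_{R}(M)\subseteq (N:_{R}M)$, valid for any submodule $N$, since $rM=\{0\}$ forces $rM\subseteq N$. Thus the hypothesis $(N:_{R}M)\neq \mathrm{Ann}_{R}(M)$ says exactly that this inclusion is \emph{strict}. Both ideals are graded: $(N:_{R}M)$ by (\cite{Atani}, Lemma 2.1), and $\mathrm{Ann}_{R}(M)=(\{0\}:_{R}M)$ for the same reason.

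The key step is then to produce a \emph{homogeneous} witness to the strictness. A graded ideal is the direct sum of its homogeneous components, so if every homogeneous element of $(N:_{R}M)$ lay in $\mathrm{Ann}_{R}(M)$, we would obtain $(N:_{R}M)\subseteq \mathrm{Ann}_{R}(M)$, contradicting strictness. Hence there exists $r\in h(R)\cap (N:_{R}M)$ with $r\notin \mathrm{Ann}_{R}(M)$. I would then feed this $r$ into Theorem \ref{Proposition 2.27}: by part (1), $HZ(M)=\mathrm{Ann}_{R}(M)\cap h(R)$, and since $r$ is homogeneous with $r\notin \mathrm{Ann}_{R}(M)$, it follows that $r\notin HZ(M)$. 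Part (2) then gives $rM=M$. But $r\in (N:_{R}M)$ yields $rM\subseteq N$, so $M=rM\subseteq N\subseteq M$, forcing $N=M$.

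The only genuinely delicate point is the middle step---extracting a homogeneous element from the set-difference of two graded ideals---and even this is not a real obstacle, since the gradedness of both ideals reduces it to the componentwise description of graded ideals. Everything else is a direct invocation of the already-established Theorem \ref{Proposition 2.27}, so I expect the argument to be essentially immediate once the homogeneous witness is in hand.
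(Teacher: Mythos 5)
Your proof is correct and follows essentially the same route as the paper: pick an element of $(N:_{R}M)\setminus \mathrm{Ann}_{R}(M)$, use Theorem \ref{Proposition 2.27} to get $rM=M$, and conclude $M=rM\subseteq N$. In fact you are slightly more careful than the paper, which takes an arbitrary (not necessarily homogeneous) element $x$ of the difference and applies the theorem directly; your observation that gradedness of both ideals supplies a homogeneous witness is exactly the detail needed to make that step rigorous.
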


\begin{proof}
Let $x\in ( N :_{R} M)\setminus Ann_{R}(M)$. By the above theorem,  we have $M=xM$, and so $M=xM \subseteq ( N :_{R} M) M\subseteq N$. Hence $N=M$.
\end{proof}

Since for a graded multiplication $R$-module $( N :_{R} M)=( 0 :_{R} M)= Ann_{R}(M)$ if and only if $N=0$, we have:

\begin{cor}\label{cor 6.2}Let $M$ be a finitely  generated  graded multiplication $R$-module, and $s\in h^*(R)$. If every proper graded $R$-submodule of $M$ is  graded $s$-prime. Then $M$ is a simple graded $R$-module.
\end{cor}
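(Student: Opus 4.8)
The plan is to run a short contrapositive argument built directly on Corollary \ref{cor6.1}. Let $N$ be an arbitrary proper graded $R$-submodule of $M$; the goal is to force $N=\{0\}$, so that the only graded submodules of $M$ are $\{0\}$ and $M$ itself, which is precisely the definition of a graded simple module.

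First I would record the equivalence announced in the sentence preceding the statement. Since $M$ is a graded multiplication $R$-module, every graded submodule satisfies the defining identity $N=(N :_R M)M$. Hence if $(N :_R M)=Ann_R(M)=(0 :_R M)$, then $N=(N :_R M)M=Ann_R(M)M=\{0\}$, using that $Ann_R(M)M=\{0\}$ by definition of the annihilator. Conversely, $N=\{0\}$ gives $(N :_R M)=(0 :_R M)=Ann_R(M)$. Thus $(N :_R M)=Ann_R(M)$ if and only if $N=\{0\}$.

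Now I would suppose toward a contradiction that $N\neq\{0\}$. By the equivalence just established, $(N :_R M)\neq Ann_R(M)$. The hypothesis of the corollary, that every proper graded $R$-submodule of $M$ is graded $s$-prime, is exactly what Corollary \ref{cor6.1} requires, so that result applies and yields $N=M$, contradicting that $N$ is proper. Therefore every proper graded submodule equals $\{0\}$, and the only graded $R$-submodules of $M$ are $\{0\}$ and $M$, so $M$ is a graded simple $R$-module.

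I expect no genuine obstacle here, since the substantive work has been front-loaded into Theorem \ref{Proposition 2.27} and Corollary \ref{cor6.1}; the argument is essentially a repackaging of the contrapositive of the latter. The only step that warrants a moment's care is the equivalence $(N :_R M)=Ann_R(M)\iff N=\{0\}$, which leans on both the multiplication identity $N=(N :_R M)M$ and the relation $Ann_R(M)M=\{0\}$. Once that equivalence is stated cleanly, the corollary follows at once.
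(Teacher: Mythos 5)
Your proof is correct and follows exactly the route the paper intends: it combines the equivalence $(N:_R M)=Ann_R(M)\iff N=\{0\}$ for graded multiplication modules (which the paper states in the sentence preceding the corollary) with the contrapositive of Corollary \ref{cor6.1}. No issues.
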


In particular, we get the following result:

\begin{cor}\label{cor 6.3}Let $R$ be a graded ring, and $s\in h^*(R)$. If every proper graded ideal of $R$ is  graded $s$-prime. Then $R$ is a graded field.
\end{cor}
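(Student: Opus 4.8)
The plan is to obtain this corollary as the special case $M=R$ of Corollary \ref{cor 6.2}, where $R$ is regarded as a module over itself. First I would check that $R$ satisfies the hypotheses of that corollary. As an $R$-module, $R$ is cyclic (generated by $1\in R_e$), hence finitely generated. It is also a graded multiplication module: its graded $R$-submodules are exactly its graded ideals, and for any graded ideal $I$ one has $(I:_R R)=I$, since $r=r\cdot 1$ forces $r\in (I:_R R)$ if and only if $r\in I$; consequently $(I:_R R)R = IR = I$, which is precisely the graded multiplication condition. In particular, the hypothesis ``every proper graded ideal of $R$ is graded $s$-prime'' is literally the statement ``every proper graded $R$-submodule of the $R$-module $R$ is graded $s$-prime.''

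With these observations in place, Corollary \ref{cor 6.2} applies and shows that $R$ is a simple graded $R$-module; by the definition recalled in the introduction this means the only graded ideals of $R$ are $\{0\}$ and $R$. The remaining task is to upgrade this to the conclusion that $R$ is a graded field, i.e.\ that every nonzero homogeneous element is invertible. Given $a\in h^*(R)$, say $a\in R_g$, Lemma \ref{1.3}(3) guarantees that $Ra$ is a graded ideal, and it is nonzero since $a\neq 0$; hence $Ra=R$ and $1\in Ra$, so $1=ra$ for some $r\in R$. Writing $r=\sum_{h\in G}r_h$ and comparing components of degree $e$ in the identity $1=\sum_{h}r_h a$ (where $r_h a\in R_{hg}$) yields $r_{g^{-1}}a=1$, so $a$ admits the homogeneous inverse $r_{g^{-1}}\in R_{g^{-1}}$. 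Therefore every nonzero homogeneous element of $R$ is invertible and $R$ is a graded field.

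There is essentially no serious obstacle here, as the substance of the argument is already carried by Corollary \ref{cor 6.2}. The only points demanding a little care are the routine verification that $R$ is a finitely generated graded multiplication module over itself, and the final degree-decomposition argument producing a homogeneous inverse; both are short and standard.
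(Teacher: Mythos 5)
Your proposal is correct and follows exactly the route the paper intends: Corollary \ref{cor 6.3} is stated as the specialization of Corollary \ref{cor 6.2} to $M=R$, and your verifications (that $R$ is a finitely generated graded multiplication module over itself, and that graded simplicity of $R$ as a module over itself yields invertibility of every nonzero homogeneous element via the graded ideal $Ra$) just make explicit the details the paper leaves implicit. No gaps.
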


\section{Behavior of graded $s$-prime submodules }\label{sec 2}

In the section we study the behavior of graded $s$-prime submodules with respect to graded homomorphisms, localization of graded modules, direct product, and idealization.

Let $M$ and $T$ be $G$-graded $R$-modules. Then an $R$-homomorphism $f:M\rightarrow T$ is said to be a graded $R$-homomorphism if $f(M_{g})\subseteq T_{g}$ for all $g\in G$ (see \cite{Nastasescue}).

\begin{prop}\label{Proposition 2.7}Let $M$ and $T$ be $G$-graded $R$-modules, and $s\in h^*(R)$. Assume that $f:M\rightarrow T$ is a graded $R$-homomorphism.
\begin{enumerate}
\item If $K\in GSpec_{s}(_{R}T)$ and $s\notin(f^{-1}(K) :_{R} M)$, then $f^{-1}(K)\in GSpec_{s}(_{R}M)$.

\item If $f$ is a graded $R$-epimorphism and $N\in GSpec_{s}(_{R}M)$ with $Ker(f)\subseteq N$, then $f(N)\in GSpec_{s}(_{R}T)$.
\end{enumerate}
\end{prop}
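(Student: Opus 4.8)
The plan is to verify the defining property of graded $s$-prime submodules directly in each case, using the fact that a graded $R$-homomorphism respects the grading, so that preimages and images of graded submodules remain graded and homogeneous elements are carried to homogeneous elements. For part (1), I first note that $f^{-1}(K)$ is a graded $R$-submodule of $M$ since $f(M_g)\subseteq T_g$ forces $f^{-1}(K)=\bigoplus_{g\in G}(f^{-1}(K)\cap M_g)$. The hypothesis $s\notin(f^{-1}(K):_R M)$ is exactly the first condition in the definition, so it remains to check the implication. Take $r\in h(R)$ and $m\in h(M)$ with $rm\in f^{-1}(K)$. Then $f(rm)=rf(m)\in K$, and since $f(m)\in h(T)$ (because $f$ is graded) and $K\in GSpec_s(_RT)$, I conclude $sr\in(K:_R T)$ or $sf(m)\in K$. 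The point is then to transfer these back: if $sr\in(K:_R T)$, then $srM\subseteq f^{-1}(srM)$-type reasoning gives $srf(M)\subseteq K$, hence $srM\subseteq f^{-1}(K)$, so $sr\in(f^{-1}(K):_R M)$; if $sf(m)=f(sm)\in K$, then $sm\in f^{-1}(K)$. Either way the required conclusion holds, so $f^{-1}(K)\in GSpec_s(_RM)$.

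For part (2), since $f$ is a graded epimorphism and $N$ is graded, $f(N)$ is a graded $R$-submodule of $T$; here the hypothesis $\mathrm{Ker}(f)\subseteq N$ will be needed to control $(f(N):_R T)$ and to ensure the first condition $s\notin(f(N):_R T)$. The main computation is to relate $(f(N):_R T)$ to $(N:_R M)$: using surjectivity, $f(M)=T$, and $\mathrm{Ker}(f)\subseteq N$, one gets $(f(N):_R T)=(N:_R M)$. This identity immediately yields $s\notin(f(N):_R T)$ from $s\notin(N:_R M)$, disposing of the first condition. For the implication, take $t\in h(T)$ and $r\in h(R)$ with $rt\in f(N)$; by surjectivity write $t=f(m)$ with $m\in h(M)$ (this uses that $f$ maps $M_g$ onto $T_g$, which follows from $f$ being a graded epimorphism), so $f(rm)=rt\in f(N)$ gives $rm\in N+\mathrm{Ker}(f)=N$. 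Then $N\in GSpec_s(_RM)$ yields $sr\in(N:_R M)=(f(N):_R T)$ or $sm\in N$, and in the latter case $st=f(sm)\in f(N)$, as required.

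The step I expect to be the main obstacle is the identity $(f(N):_R T)=(N:_R M)$ in part (2), and more specifically the argument that $rm\in N$ can be recovered from $f(rm)\in f(N)$. The inclusion $f(rm)\in f(N)$ only gives $rm\in N+\mathrm{Ker}(f)$ a priori, so the hypothesis $\mathrm{Ker}(f)\subseteq N$ is essential precisely here to collapse $N+\mathrm{Ker}(f)$ back to $N$. A parallel subtlety in part (1) is that the passage from $sr\in(K:_R T)$ to $sr\in(f^{-1}(K):_R M)$ must use $srf(M)\subseteq K$ together with $srM\subseteq f^{-1}(srf(M))$; this is routine but requires care to phrase correctly without assuming surjectivity in part (1). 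Once these containment bookkeeping steps are handled, everything else reduces to unwinding the definitions.
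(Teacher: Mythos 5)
Your proposal is correct and matches the paper's own argument essentially step for step: part (1) via the inclusion $(K:_R T)\subseteq(f^{-1}(K):_R M)$ obtained from $f(xM)=xf(M)\subseteq K$, and part (2) via surjectivity plus $\mathrm{Ker}(f)\subseteq N$ to pull $rt\in f(N)$ back to $rm\in N$. Your packaging of the key containments as the single identity $(f(N):_R T)=(N:_R M)$ is a slightly cleaner statement of what the paper proves in two separate pieces, but it is not a different method.
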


\begin{proof}
\begin{enumerate}
\item Clearly, $f^{-1}(K)$ is a graded $R$-submodule of $M$. Let $rm\in f^{-1}(K)$ for some $r\in h(R)$, $m\in h(M)$. Then $f(rm) = rf(m)\in K$. Since $K\in GSpec_{s}(_{R}T)$, we have $sr\in (K :_{R} T)$ or $sf(m) = f(sm)\in K$. Now we show that $(K :_{R} T)\subseteq (f^{-1}(K) :_{R} M)$. Let $x\in (K :_{R}T)$. Then $xT\subseteq K$. Since $f(M)\subseteq T$, we have that $f(xM) = xf(M)\subseteq xT\subseteq K$, which implies that $xM\subseteq xM + Ker(f) = f^{-1}(f(xM))\subseteq f^{-1}(K)$ and hence $x\in (f^{-1}(K) :_{R} M)$. As $(K :_{R} T)\subseteq (f^{-1}(K) :_{R} M)$, we have either $sr\in (f^{-1}(K) :_{R} M)$ or $sm\in f^{-1}(K)$. Hence, $f^{-1}(K)\in GSpec_{s}(_{R}M)$.

\item Clearly, $f(N)$ is a graded $R$-submodule of $T$. We have $s\notin(f(N) :_{R} T)$. Otherwise, $f(sM) = sf(M)\subseteq sT\subseteq f(N)$, and so  $sM\subseteq sM + Ker(f)\subseteq N + Ker(f) =N$, which implies that $sM\subseteq N$. Thus $s\in(N :_{R} M)$ , which contradicts that $N\in GSpec_{s}(_{R}M)$.
    Now, let $r\in h(R)$, $t\in h(T)$ with $rt\in f(N)$. Since $f$ is an $R$-epimorphism, there is an $m\in h(M)$ such that $t = f(m)$, and so  $rt = rf(m) = f(rm)\in f(N)$. Using the fact that $Ker(f)\subseteq N$ , we get $rm\in N$. But $N\in GSpec_{s}(_{R}M)$ and $(N :_{R} M)\subseteq (f(N) :_{R} T)$, imply that  $sr\in (f(N) :_{R} T)$ or $st = sf(m) = f(sm) \in f(N)$. Thus $f(N)\in GSpec_{s}(_{R}T)$.
\end{enumerate}
\end{proof}

For the sake of completeness we give the proof of the graded version of the following two classical result.

\begin{lem}\label{2} Let $M$ be a graded $R$-module, $L$ be a graded $R$-submodule of $M$, and $N$ be an $R$-submodules of $M$ such that $L\subseteq N$. Then $N$ is a graded $R$-submodule of $M$ if and only if $N/L$ is a graded $R$-submodule of $M/L$.
 \end{lem}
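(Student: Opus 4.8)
The plan is to prove both directions by unwinding the definition of a graded submodule, namely that a submodule $P$ of a graded module is graded precisely when $P=\bigoplus_{g\in G}(P\cap (\text{degree-}g\text{ component}))$, equivalently when every homogeneous component of every element of $P$ again lies in $P$. The key structural fact I will lean on is the description of the grading on the quotient: $(M/L)_g=(M_g+L)/L$ for all $g\in G$, which holds because $L$ is graded. This identity is what lets me translate homogeneous decompositions in $M$ into homogeneous decompositions in $M/L$ and back.

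First I would prove the forward direction. Assume $N$ is a graded $R$-submodule of $M$ with $L\subseteq N$. Since $L$ is graded and contained in $N$, the quotient $N/L$ is a well-defined $R$-submodule of $M/L$, so it remains to check it is graded. Take an arbitrary element $\overline{x}=x+L\in N/L$ with $x\in N$. Because $N$ is graded, $x=\sum_{g\in G}x_g$ with each $x_g\in N\cap M_g\subseteq N$. Projecting to the quotient gives $\overline{x}=\sum_{g\in G}\overline{x_g}$, and I will verify that each $\overline{x_g}$ is the degree-$g$ homogeneous component of $\overline{x}$ in $M/L$: indeed $\overline{x_g}=x_g+L\in (M_g+L)/L=(M/L)_g$, and $\overline{x_g}\in N/L$ since $x_g\in N$. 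This exhibits $\overline{x}$ as a sum of homogeneous elements of $N/L$ lying in the respective graded components, so $N/L$ is graded.

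For the converse, assume $N/L$ is a graded $R$-submodule of $M/L$ and that $N$ is an $R$-submodule of $M$ with $L\subseteq N$; I must show $N$ itself is graded. Let $x\in N$ with homogeneous decomposition $x=\sum_{g\in G}x_g$, $x_g\in M_g$. Passing to the quotient, $\overline{x}=\sum_{g}\overline{x_g}\in N/L$, where $\overline{x_g}\in (M/L)_g$. Since $N/L$ is graded, each homogeneous component of $\overline{x}$ lies in $N/L$, hence $\overline{x_g}\in N/L$, which means $x_g\in N+L=N$ (using $L\subseteq N$). Thus every homogeneous component $x_g$ of $x$ lies in $N$, which is exactly the condition for $N$ to be a graded $R$-submodule of $M$.

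The only subtlety worth guarding against, and what I regard as the main point requiring care rather than a genuine obstacle, is the uniqueness of homogeneous decompositions in the quotient: I must ensure that the decomposition $\overline{x}=\sum_g\overline{x_g}$ really is \emph{the} homogeneous decomposition of $\overline{x}$ in $M/L$, so that the gradedness of $N/L$ can be applied component-by-component. This follows because $M/L=\bigoplus_{g\in G}(M/L)_g$ is a direct sum, so homogeneous components are unique, and the natural projection $M\to M/L$ restricts to a surjection $M_g\to (M/L)_g$ for each $g$. Once this bookkeeping is in place, both implications are immediate, and no further computation is needed.
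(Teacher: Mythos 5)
Your proof is correct and follows essentially the same route as the paper's: both directions decompose an element into homogeneous components and pass between $N$ and $N/L$ via the identification $(M/L)_g=(M_g+L)/L$. The extra remark on uniqueness of homogeneous components in the quotient is a welcome bit of added rigor but does not change the argument.
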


 \begin{proof} Suppose that $N$ is a graded $R$-submodule of $M$. Clearly, $N/L$ is an $R$-submodule of $M/L$. Let $x+L\in N/L$. Then $x\in N$ and since $N$ is graded, $x=\displaystyle\sum_{g\in G}x_{g}$ where $x_{g}\in N$ for all $g\in G$ and then $(x+L)_{g}=x_{g}+L\in N/L$ for all $g\in G$. Hence, $N/L$ is a graded $R$-submodule of $M/L$. Conversely, let $x\in N$. Then $x=\displaystyle\sum_{g\in G}x_{g}$ where $x_{g}\in M_{g}$ for all $g\in G$ and then $(x_{g}+L)\in (M_{g}+L)/L=\left(M/L\right)_{g}$ for all $g\in G$ such that $\displaystyle\sum_{g\in G}(x+L)_{g}=\displaystyle\sum_{g\in G}(x_{g}+L)=\left(\displaystyle\sum_{g\in G}x_{g}\right)+L=x+L\in N/L$. Since $N/L$ is graded, $x_{g}+L\in N/L$ for all $g\in G$ which implies that $x_{g}\in N$ for all $g\in G$. Hence, $N$ is a graded $R$-submodule of $M$.
 \end{proof}

\begin{prop}\label{Corollary 2.8}Let $M$ be a graded $R$-module, $L$ be a graded $R$-submodule of $M$, and $s\in h^*(R)$.
\begin{enumerate}
\item If $K\in GSpec_{s}(_{R}M)$ and $s\notin(K :_{R} L)$, then $L\bigcap K\in GSpec_{s}(_{R}L)$.

\item Suppose that $N$ is an $R$-submodule of $M$ with $L\subseteq N$. Then $N\in GSpec_{s}(_{R}M)$ if and only if $N/L\in GSpec_{s}(_{R}(M/L))$.
\end{enumerate}
\end{prop}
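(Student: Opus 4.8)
The plan is to reduce both parts to the defining property of a graded $s$-prime submodule by carefully tracking homogeneous elements and the relevant colon ideals. In each part the gradedness of the relevant submodule is supplied by earlier results (Lemma \ref{1.3}(2) for part (1) and Lemma \ref{2} for part (2)), so the substance of the argument lies in verifying the two conditions of the definition: that $s$ lies outside the appropriate colon ideal, and that the implication ``$rm\in N$ forces $sr\in(N:_RM)$ or $sm\in N$'' transfers correctly between the two settings.

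For part (1), I would first note that $L\bigcap K$ is a graded $R$-submodule of $M$ by Lemma \ref{1.3}(2), hence a graded submodule of $L$ since it is contained in $L$ and $(L\bigcap K)\bigcap M_g=(L\bigcap K)\bigcap L_g$. The decisive colon computation is $(L\bigcap K:_RL)=(K:_RL)$, which holds because $rL\subseteq L\bigcap K$ is equivalent to $rL\subseteq K$ (the inclusion $rL\subseteq L$ being automatic); thus the hypothesis $s\notin(K:_RL)$ yields $s\notin(L\bigcap K:_RL)$. For the main implication, take $r\in h(R)$ and $m\in h(L)\subseteq h(M)$ with $rm\in L\bigcap K\subseteq K$, and apply the $s$-primeness of $K$ in $M$ to get $sr\in(K:_RM)$ or $sm\in K$. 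In the first case $srL\subseteq srM\subseteq K$ together with $srL\subseteq L$ gives $sr\in(L\bigcap K:_RL)$; in the second case $sm\in K$ and $sm\in L$ give $sm\in L\bigcap K$. This establishes $L\bigcap K\in GSpec_s(_RL)$.

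For part (2), the key observation is the identity $(N:_RM)=(N/L:_RM/L)$ of ideals of $R$, which follows because, for $r\in R$, one has $r(M/L)\subseteq N/L$ exactly when $rm\in N$ for every $m\in M$ (using $L\subseteq N$). In particular $s\notin(N:_RM)$ if and only if $s\notin(N/L:_RM/L)$, and Lemma \ref{2} makes $N$ graded in $M$ equivalent to $N/L$ graded in $M/L$. For the equivalence of the implications I would set up the dictionary: a homogeneous element of $M/L$ of degree $g$ has the form $m+L$ with $m\in M_g$, so one may always choose a homogeneous representative $m\in h(M)$; after this reduction, $rm+L\in N/L$ is equivalent to $rm\in N$, and $s(m+L)\in N/L$ is equivalent to $sm\in N$. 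Running the definition of $s$-prime through this dictionary in both directions, together with the colon identity, yields $N\in GSpec_s(_RM)$ if and only if $N/L\in GSpec_s(_R(M/L))$.

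All the steps are elementary, and the point I would flag as the main obstacle is the bookkeeping around homogeneous representatives in the quotient $M/L$ in part (2): since the graded structure on $M/L$ is defined via $(M/L)_g=(M_g+L)/L$, one must confirm that testing the $s$-prime condition on homogeneous cosets is genuinely equivalent to testing it on homogeneous elements of $M$. Once the representative $m\in M_g$ is chosen, everything else is a direct transcription of the definition through the two colon identities.
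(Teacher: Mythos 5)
Your proof is correct. The only real difference from the paper is organizational: the paper derives part (1) by applying Proposition \ref{Proposition 2.7}(1) to the inclusion $f:L\rightarrow M$ (noting $f^{-1}(K)=L\bigcap K$ and checking $s\notin(f^{-1}(K):_RL)$), and the forward half of part (2) by applying Proposition \ref{Proposition 2.7}(2) to the projection $M\rightarrow M/L$, whereas you inline these verifications directly from the definition; the converse of part (2) is proved in the paper by essentially the same homogeneous-representative dictionary you use. Your colon identities $(L\bigcap K:_RL)=(K:_RL)$ and $(N:_RM)=(N/L:_RM/L)$, and the observation that every homogeneous element of $(M/L)_g=(M_g+L)/L$ admits a representative in $M_g$, are exactly the points that make the paper's reduction to Proposition \ref{Proposition 2.7} legitimate, so nothing is lost either way. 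The self-contained version costs a little repetition of work already packaged in Proposition \ref{Proposition 2.7}, but it has the minor advantage of making explicit where the hypothesis $L\subseteq N$ enters (namely in $rm+L\in N/L\Leftrightarrow rm\in N$), which the paper leaves implicit in the condition $Ker(f)\subseteq N$.
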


\begin{proof}
\begin{enumerate}
\item Clearly, $L\bigcap K$ is a graded $R$-submodule of $L$. Consider the graded $R$-homomorphism $f : L\rightarrow M$ defined by $f(m) = m$ for all $m\in L$. Then  $f^{-1}(K) = L\bigcap K$. Also, we have  $s\notin (f^{-1}(K) :_{R} L)$. Otherwise we get $sL\subseteq f^{-1}(K) = L\bigcap K\subseteq K$ and thus $s\in (K :_{R} L)\bigcap S$, which is a contradiction. The result holds by Proposition \ref{Proposition 2.7} (1).

\item Suppose that $N\in GSpec_{s}(_{R}M)$ with $L\subseteq N$. Then by Lemma \ref{2}, $N/L$ is a graded $R$-submodule of $M/L$. Consider the graded $R$-epimorphism $f : M\rightarrow M/L$ defined by $f(m) = m + L$ for all $m\in M$. By Proposition \ref{Proposition 2.7} (2), $N/L\in GSpec_{s}(_{R}(M/L))$. Conversely, by Lemma \ref{2}, $N$ is a graded $R$-submodule of $M$. Let $rm\in N$ for some $r\in h(R)$, $m\in h(M)$. Then $r(m+ L) = rm+ L\in N/L$. Since $N/L\in GSpec_{s}(_{R}(M/L))$, we have $sr\in (N/L :_{R} M/L) = (N :_{R} M)$ or $s(m + L) = sm + L\in N/L$. Therefore, we have $sr\in (N :_{R} M)$ or $sm\in N$. Hence, $N\in GSpec_{s}(_{R}M)$.
\end{enumerate}
\end{proof}


Let $M$ be a $G$-graded $R$-module, and $N$ be a graded $R$-submodule of $M$. If  $s$ and $t$ are elements of $h^*(R)$ such that $N$ is both $s$-prime and $t$-prime,  then one can directly deduce that $N$ is also $st$-prime whenever  $st\notin (N :_{R} M )$. This have been said, then it is natural to focus on graded modules of fractions with respect to multiplicative sets lying outside $(N :_{R} M)$.

Consider a nonempty subset $S$ of $R$. We call $S$ a multiplicatively closed subset (briefly, m.c.s.) of $R$ if (i) $0\notin S$, (ii) $1\in S$, and (iii) $ss^{\prime}\in S$ for all $s, s^{\prime}\in S$ (see \cite{Wang}). Note that $S_{I} = h(R)-I$ is a m.c.s. of $R$ for every $I\in GSpec(R)$.
Let $S\subseteq h(R)$ be a m.c.s and $M$ be a graded $R$-module. Then $S^{-1}M$ is a graded $S^{-1}R$-module with $(S^{-1}M)_g=\left\{\frac{m}{s},m\in M_h, s\in S\cap R_{hg^{-1}}\right\}$ and $(S^{-1}R)_g=\left\{\frac{a}{s},a\in R_h, s\in S\cap R_{hg^{-1}}\right\}$ for all $g\in G$. It is obvious that \\$S^{*}=\left\{x\in h(R):\frac{x}{1}\mbox{ is a homogeneous unit of }S^{-1}R\right\}$ is a m.c.s of $R$ containing $S$.

\begin{prop}\label{Proposition 2.2} Let $M$ be a graded $R$-module, and $N$ be a graded $R$-submodule of $M$. Suppose that  $S\subseteq h(R)$ is a m.c.s verifying $(N :_{R} M)\bigcap S=\emptyset$.
 If $N\in GSpec_{s}(_{R}M)$ for some $s\in S$, then $S^{-1}N$ is a graded prime $S^{-1}R$-submodule of $S^{-1}M$.

\end{prop}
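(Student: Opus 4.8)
The plan is to verify directly the two requirements in the definition of a graded prime submodule for $S^{-1}N$, viewed as a graded $S^{-1}R$-submodule of $S^{-1}M$: that $S^{-1}N$ is proper, and that the prime condition holds on homogeneous elements. Throughout I will use the description of the grading on the localization recalled above, namely that a homogeneous element of $S^{-1}R$ (resp.\ $S^{-1}M$) has the form $\frac{a}{v}$ (resp.\ $\frac{m}{v}$) with $a\in h(R)$, $m\in h(M)$ and $v\in S$ homogeneous, so that every denominator occurring in the argument is a homogeneous member of $S$.

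For properness I would argue by contradiction. Since $N\in GSpec_{s}(_{R}M)$ we have $s\notin(N:_{R}M)$, hence $sM\nsubseteq N$; as $N$ is graded and $s$ is homogeneous, there is some $m_{0}\in h(M)$ with $sm_{0}\notin N$. If $S^{-1}N=S^{-1}M$, then $\frac{m_{0}}{1}\in S^{-1}N$, which yields $s'\in S$ with $s'm_{0}\in N$. Applying the graded $s$-prime property to $s'm_{0}\in N$ (with $s'\in h(R)$, $m_{0}\in h(M)$) gives $ss'\in(N:_{R}M)$ or $sm_{0}\in N$; the latter is excluded by the choice of $m_{0}$, so $ss'\in(N:_{R}M)$. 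But $ss'\in S$ because $S$ is multiplicatively closed, contradicting $(N:_{R}M)\cap S=\emptyset$. Hence $S^{-1}N\neq S^{-1}M$.

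For the prime condition, take homogeneous $\frac{r}{t}\in S^{-1}R$ and $\frac{m}{u}\in S^{-1}M$ with $\frac{r}{t}\cdot\frac{m}{u}\in S^{-1}N$. Clearing denominators produces a homogeneous $s'\in S$ with $s'rm\in N$. Writing this as $(s'r)m\in N$, with $s'r\in h(R)$ and $m\in h(M)$, and invoking $N\in GSpec_{s}(_{R}M)$, we obtain $ss'r\in(N:_{R}M)$ or $sm\in N$. In the second case $\frac{m}{u}=\frac{sm}{su}\in S^{-1}N$ since $su\in S$. In the first case $ss'rM\subseteq N$ shows $\frac{ss'r}{1}\in(S^{-1}N:_{S^{-1}R}S^{-1}M)$, and since $\frac{r}{t}=\frac{ss'r}{1}\cdot\frac{1}{ss't}$ with $ss't\in S$, the fact that the colon is an ideal of $S^{-1}R$ forces $\frac{r}{t}\in(S^{-1}N:_{S^{-1}R}S^{-1}M)$. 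Either way the prime condition is satisfied, so $S^{-1}N$ is graded prime.

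The routine heart of the argument is the denominator-clearing step, whose only real content is that each extra factor of $s$, $s'$, $t$, $u$ introduced along the way lies in $S$ and hence becomes a unit of $S^{-1}R$, so it can be absorbed without affecting membership in $S^{-1}N$ or in the colon ideal. The step I expect to be genuinely essential — where all three hypotheses ($s\notin(N:_{R}M)$, the $s$-prime property, and the disjointness $(N:_{R}M)\cap S=\emptyset$) are used together — is the proof of properness: without the disjointness assumption the degenerate case $S^{-1}N=S^{-1}M$ could not be excluded, and the conclusion would fail.
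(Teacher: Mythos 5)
Your proof is correct and follows essentially the same route as the paper's: clear denominators to obtain $s'rm\in N$ with $s'\in S\subseteq h(R)$, apply the graded $s$-prime property to the pair $(s'r,m)$, and absorb the resulting factors of $s$ and $s'$ as homogeneous units of $S^{-1}R$. The only difference is that you also verify properness of $S^{-1}N$ (using the disjointness $(N:_{R}M)\cap S=\emptyset$ together with the $s$-prime property), a step the paper's proof omits entirely; your argument for it is sound and fills that small gap.
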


\begin{proof}
 Suppose that $N\in GSpec_{s}(_{R}M)$ for some $s\in S$. Let $\frac{r}{a}\in h(S^{-1}R)$ and $\frac{m}{b}\in h(S^{-1}M)$ such that $\frac{r}{a}.\frac{m}{b}\in S^{-1}N$. Then $urm\in N$ for some $u\in S$. Since $N\in GSpec_{s}(_{R}M)$, we have $s ur\in (N:_{R}M)$ or $s m\in N$, which implies that $\frac{r}{a}=\frac{s ur}{s ua}\in S^{-1}(N:_{R}M)\subseteq(S^{-1}N:_{S^{-1}R}S^{-1}M)$ or $\frac{m}{b}=\frac{s m}{s t}\in S^{-1}N$. Hence, $S^{-1}N$ is a graded prime $S^{-1}R$-submodule of $S^{-1}M$.

\end{proof}

Now, we are going to prove that the converse of Proposition \ref{Proposition 2.2} is not true in general. Firstly, we need the introduce following:

\begin{defn}A graded commutative ring $R$ with unity is said to be a graded field if every nonzero homogeneous element of $R$ is unit.
\end{defn}

The next example shows that a graded field need not be a field.

\begin{exa}Let $R$ be a field and suppose that $F=\left\{x+uy:x, y\in R, u^{2}=1\right\}$. If $G=\mathbb{Z}_{2}$, then $F$ is $G$-graded by $F_{0}=R$ and $F_{1}=uR$. Let $a\in h(F)$ such that $a\neq0$. If $a\in F_{0}$, then $a\in R$ and since $R$ is a field, we have $a$ is a unit element. Suppose that $a\in F_{1}$. Then $a=uy$ for some $y\in R$. Since $a\neq0$, we have $y\neq0$, and since $R$ is a field, we have $y$ is a unit element, that is $zy=1$ for some $z\in R$. Thus, $uz\in F_{1}$ such that $(uz)a=uz(uy)=u^{2}(zy)=1.1=1$, which implies that $a$ is a unit element. Hence, $F$ is a graded field. On the other hand, $F$ is not a field since $1+u\in F-\{0\}$ is not a unit element since $(1+u)(1-u)=0$.
\end{exa}

\begin{lem}\label{1}Let $R$ be a graded field and $M$ be a graded $R$-module. Then every proper graded $R$-submodule of $M$ is graded prime.
\end{lem}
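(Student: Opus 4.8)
The plan is to unwind the definition of graded prime submodule directly, relying only on the defining property of a graded field that every nonzero homogeneous element is a unit. Let $N$ be a proper graded $R$-submodule of $M$, and fix $r\in h(R)$ and $m\in h(M)$ with $rm\in N$. The goal is exactly to show that $r\in (N:_{R}M)$ or $m\in N$, which is the graded prime condition for $N$.

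First I would dispose of the degenerate case $r=0$: here $rM=\{0\}\subseteq N$, so $r\in (N:_{R}M)$ and there is nothing further to check. So assume $r\neq 0$. Since $r\in h(R)$ is a \emph{nonzero} homogeneous element and $R$ is a graded field, $r$ is a unit of $R$; let $r^{-1}$ be its inverse in $R$. Because $N$ is an $R$-submodule and $rm\in N$, multiplying by the scalar $r^{-1}$ keeps us inside $N$, giving $m=r^{-1}(rm)\in N$. Thus in either case the alternative $r\in (N:_{R}M)$ or $m\in N$ holds, so $N\in GSpec(_{R}M)$.

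I expect no serious obstacle; the argument is in essence a one-line application of invertibility, and the homogeneity hypotheses on $r$ and $m$ play almost no role beyond letting us invoke the graded field property. The only subtlety worth flagging is that one does \emph{not} need to verify that $r^{-1}$ is homogeneous: the step $r^{-1}(rm)\in N$ uses only that $N$ is an $R$-submodule, hence closed under multiplication by arbitrary elements of $R$, not merely by homogeneous ones. (In fact $r^{-1}$ is homogeneous of degree $g^{-1}$ when $r\in R_{g}$, as one sees by comparing graded components in $r\,r^{-1}=1\in R_{e}$, but this refinement is not needed here.)
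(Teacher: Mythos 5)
Your proof is correct and follows essentially the same route as the paper's: dispose of the case $r=0$ directly, and otherwise use the graded field hypothesis to invert the nonzero homogeneous element $r$ and conclude $m=r^{-1}(rm)\in N$. The remark that homogeneity of $r^{-1}$ is not needed (only closure of $N$ under arbitrary scalars) is a fair, if unnecessary, clarification.
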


\begin{proof}Let $N$ be a proper graded $R$-submodule of $M$. Suppose that $r\in h(R)$ and $m\in h(M)$ such that $rm\in N$. If $r=0$, then $r=0\in (N:_{R}M)$. If $r\neq0$, then $r$ is unit as $R$ is graded field, and then $r^{-1}\in R$ with $r^{-1}(rm)=m\in N$. Hence, $N$ is a graded prime $R$-submodule of $M$.
\end{proof}

The required example is:

\begin{exa}\label{Example 2.4}Consider $R=\mathbb{Z}$ and $G=\mathbb{Z}_{2}$. Then $R$ is trivially $G$-graded by $R_{0}=\mathbb{Z}$ and $R_{1}=\{0\}$. Consider the $R$-module $T=\mathbb{Q}[i]$. Then $T$ is $G$-graded by $T_{0}=\mathbb{Q}$ and $T_{1}=i\mathbb{Q}$. So, $M=T\times T$ is a $G$-graded $R$-module where $M_{0}=T_{0}\times T_{0}$ and $M_{1}=T_{1}\times T_{1}$. Now, $N=\mathbb{Z}\times\{0\}$ is a graded $R$-submodule of $M$ with $(N:_{R}M)=\{0\}$. Consider $S=\mathbb{Z}-\{0\}$ is a m.c.s of $h(R)$. Then $S^{-1}R=Q$ is a graded field, and then by Lemma \ref{1}, $S^{-1}N$ is a graded prime $S^{-1}R$-submodule of $S^{-1}M$. On the other hand, assume that $s\in S$, and choose a prime number $p$ with $gcd(s, p)=1$. Then $p\in h(R)$ and $(\frac{1}{p}, 0)\in h(M)$ such that $p(\frac{1}{p}, 0)=(1, 0)\in N$, but $sp\notin (N:_{R}M)$ and $s(\frac{1}{p}, 0)=(\frac{s}{p}, 0)\notin N$. Hence, $N$ is not graded $S$-prime $R$-submodule of $M$.
\end{exa}

The following result gives a partial affirmative answer for the converse of Proposition \ref{Proposition 2.2}.

\begin{prop}\label{Proposition 2.17}Suppose that $M$ is a finitely generated graded $R$-module, $S\subseteq h(R)$ is a m.c.s, and $N$ is a graded $R$-submodule of $M$ such that $(N :_{R} M)\bigcap S =\emptyset$. For $s\in S$, we have $N\in GSpec_{s}(_{R}M)$ if and only if $S^{-1}N$ is a graded prime $R$-submodule of $S^{-1}M$ and  $(N :_{M} t)\subseteq (N :_{M} s)$ for all $t\in S$.
\end{prop}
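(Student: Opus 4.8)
The plan is to prove the two implications separately. For the forward direction, assume $N\in GSpec_{s}(_{R}M)$. The primeness of $S^{-1}N$ comes for free from Proposition \ref{Proposition 2.2}, since $s\in S$ together with $(N:_{R}M)\cap S=\emptyset$ are exactly its hypotheses, so the only content is the family of inclusions $(N:_{M}t)\subseteq(N:_{M}s)$. To get these, I would first observe that since $t\in S\subseteq h(R)$ is homogeneous and $N$ is graded, the submodule $(N:_{M}t)$ is itself graded, so it suffices to test the inclusion on homogeneous elements. Thus let $m\in h(M)$ with $tm\in N$. The graded $s$-prime property applied to $t\in h(R)$ and $m\in h(M)$ gives $st\in(N:_{R}M)$ or $sm\in N$. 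The first alternative is impossible, because $S$ is multiplicatively closed, so $st\in S$, and then $st\in(N:_{R}M)\cap S=\emptyset$, a contradiction. Hence $sm\in N$, i.e. $m\in(N:_{M}s)$, which proves $(N:_{M}t)\subseteq(N:_{M}s)$.

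For the reverse direction, assume $S^{-1}N$ is graded prime and $(N:_{M}t)\subseteq(N:_{M}s)$ for all $t\in S$. First, $s\notin(N:_{R}M)$ holds trivially since $s\in S$ and $(N:_{R}M)\cap S=\emptyset$. Now take $r\in h(R)$ and $m\in h(M)$ with $rm\in N$; then $\frac{r}{1}\cdot\frac{m}{1}=\frac{rm}{1}\in S^{-1}N$ in $S^{-1}M$, and graded primeness of $S^{-1}N$ forces $\frac{m}{1}\in S^{-1}N$ or $\frac{r}{1}\in(S^{-1}N:_{S^{-1}R}S^{-1}M)$. In the first case there is $t\in S$ with $tm\in N$, so $m\in(N:_{M}t)\subseteq(N:_{M}s)$, giving $sm\in N$. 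In the second case I would use that $M$ is finitely generated: writing $M=\langle m_{1},\dots,m_{k}\rangle$ with each $m_{i}\in h(M)$ (possible since $M$ is graded and finitely generated), the relation $\frac{r}{1}\in(S^{-1}N:_{S^{-1}R}S^{-1}M)$ yields $\frac{rm_{i}}{1}\in S^{-1}N$, hence $t_{i}rm_{i}\in N$ for some $t_{i}\in S$; putting $t=t_{1}\cdots t_{k}\in S$ gives $trm_{i}\in N$ for all $i$, so $trM\subseteq N$, i.e. $rM\subseteq(N:_{M}t)\subseteq(N:_{M}s)$. This last inclusion reads $srM\subseteq N$, that is $sr\in(N:_{R}M)$. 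In both cases we obtain $sr\in(N:_{R}M)$ or $sm\in N$, so $N\in GSpec_{s}(_{R}M)$.

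I expect the main obstacle to be precisely this last step: clearing denominators uniformly to pass from $\frac{r}{1}\in(S^{-1}N:_{S^{-1}R}S^{-1}M)$ to a \emph{single} common $t\in S$ with $trM\subseteq N$ (equivalently, the identity $(S^{-1}N:_{S^{-1}R}S^{-1}M)=S^{-1}(N:_{R}M)$) genuinely requires finite generation of $M$; without it one can only kill denominators one generator at a time and cannot package the result into a single colon submodule on which the hypothesis $(N:_{M}t)\subseteq(N:_{M}s)$ can be applied. The remaining points are routine but should be stated carefully: that $(N:_{M}t)$ is graded, which legitimizes testing the forward inclusion only on homogeneous elements, and the bookkeeping that $S$ being multiplicatively closed and disjoint from $(N:_{R}M)$ is exactly what rules out the unwanted $st\in(N:_{R}M)$ branch.
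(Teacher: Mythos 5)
Your proof is correct and follows essentially the same route as the paper: Proposition \ref{Proposition 2.2} for the primeness of $S^{-1}N$, the $st\in S$ observation (which is exactly how Lemma \ref{Lemma 2.16} gets used there) for the colon inclusions, and in the converse the identification $(S^{-1}N:_{S^{-1}R}S^{-1}M)=S^{-1}(N:_{R}M)$ via finite generation followed by the hypothesis $(N:_{M}t)\subseteq(N:_{M}s)$ to replace the auxiliary denominator by $s$. You actually supply more detail than the paper does at the two places it is terse (the gradedness of $(N:_{M}t)$ justifying the reduction to homogeneous elements, and the explicit clearing of denominators over a finite homogeneous generating set), so no gaps.
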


\begin{proof}Suppose that $N\in GSpec_{S}(_{R}M)$. Then the result holds by Lemma \ref{direct} and Proposition \ref{Proposition 2.2}. Conversely, let $r\in h(R)$ and $m\in h(M)$ with $rm\in N$. Then $\frac{r}{1}\frac{m}{1}\in S^{-1}N$. Since $S^{-1}N$ is a graded prime $R$-submodule of $S^{-1}M$ and $M$ is finitely generated, we have that $\frac{r}{1}\in (S^{-1}N :_{S^{-1}R} S^{-1}M) = S^{-1}(N :_{R} M)$ or $\frac{m}{1}\in S^{-1}N$. Then $ur\in (N :_{R} M)$ or $vm\in N$ for some $u, v\in S$. By assumption, we have $(N :_{M} t)\subseteq (N :_{M} s)$ for all $t\in S$. Therefore, if $ur\in (N :_{R} M)$ then $rM\subseteq (N :_{M} u)\subseteq (N :_{M} s)$ and thus $sr\in (N :_{R} M)$. If $vm\in N$ , a similar argument proves that $sm\in N$. Therefore, $N\in GSpec_{s}(_{R}M)$.
\end{proof}


Let $R_{1}$ and $R_{2}$ be $G$-graded rings. Then $R=R_{1}\times R_{2}$ is a $G$-graded ring with $R_{g}=(R_{1})_{g}\times(R_{2})_{g}$ for all $g\in G$ (see \cite{Nastasescue}).

\begin{lem}\label{3}Let $R_{1}$ and $R_{2}$ be $G$-graded rings and $R=R_{1}\times R_{2}$. Then $P=P_{1}\times P_{2}$ is a graded ideal of $R$ if and only if $P_{1}$ is a graded ideal of $R_{1}$ and $P_{2}$ is a graded ideal of $R_{2}$.
\end{lem}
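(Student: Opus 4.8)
The plan is to reduce the whole statement to the componentwise description of the grading on $R = R_1 \times R_2$. The crucial observation is that, since $R_g = (R_1)_g \times (R_2)_g$, homogeneous decomposition in $R$ is performed slotwise: if $a = \sum_{g\in G} a_g$ in $R_1$ and $b = \sum_{g\in G} b_g$ in $R_2$ are the decompositions of $a$ and $b$ into homogeneous parts, then $(a,b) = \sum_{g\in G}(a_g,b_g)$ with each $(a_g,b_g)\in R_g$, so that the $g$-component of $(a,b)$ is exactly $(a,b)_g = (a_g,b_g)$. I would establish this identification first, as every subsequent step is just an application of it. I would also recall the standard fact that $P = P_1\times P_2$ is an ideal of $R$ precisely when $P_1$ and $P_2$ are ideals of $R_1$ and $R_2$, so that the only genuine content left is the graded condition, to be checked in each direction.

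For the direction assuming $P_1$ and $P_2$ are graded, I would take an arbitrary $(a,b)\in P = P_1\times P_2$, so $a\in P_1$ and $b\in P_2$; graded-ness of $P_1$ gives $a_g\in P_1$ and graded-ness of $P_2$ gives $b_g\in P_2$ for all $g$, hence $(a,b)_g = (a_g,b_g)\in P_1\times P_2 = P$, which is exactly the definition of $P$ being graded. For the converse, assuming $P$ is a graded ideal, I would isolate one factor at a time using the embeddings: fixing $a\in P_1$ and noting $0\in P_2$, the element $(a,0)$ lies in $P$, so graded-ness of $P$ forces $(a,0)_g = (a_g,0)\in P = P_1\times P_2$, whence $a_g\in P_1$ for every $g$, giving that $P_1$ is graded; the symmetric argument with $(0,b)$ for $b\in P_2$ handles $P_2$.

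I do not expect any substantial obstacle here: the argument is essentially bookkeeping. The only point that requires a little care is the slotwise identification of homogeneous components together with the use of the two embeddings $a\mapsto(a,0)$ and $b\mapsto(0,b)$ to recover each factor separately; once that is in place, both implications are immediate from the definition of a graded ideal.
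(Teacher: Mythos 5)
Your proof is correct and follows essentially the same route as the paper's: the slotwise identification $(a,b)_g=(a_g,b_g)$, the embeddings $(a,0)$ and $(0,b)$ to recover gradedness of each factor from that of $P$, and the direct componentwise decomposition for the converse. No gaps.
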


\begin{proof}Suppose that $P$ is a graded ideal of $R$. Clearly, $P_{1}$ is an ideal of $R_{1}$ and $P_{2}$ is an ideal of $R_{2}$. Let $a\in P_{1}$. Then $(a, 0)\in P$, and then $(a, 0)_{g}=(a_{g}, 0)\in P$ for all $g\in G$ as $P$ is graded ideal, which implies that $a_{g}\in P_{1}$ for all $g\in G$. Hence, $P_{1}$ is a graded ideal of $R_{1}$. Similarly, $P_{2}$ is a graded ideal of $R_{2}$. Conversely, it is clear that $P$ is an ideal of $R$. let $(a, b)\in P$. Then $a\in P_{1}$ and $b\in P_{2}$, and then $a_{g}\in P_{1}$ and $b_{g}\in P_{2}$ for all $g\in G$ as $P_{1}$ and $P_{2}$ are graded ideals, which implies that $(a, b)_{g}=(a_{g}, b_{g})\in P$ for all $g\in G$. Hence, $P$ is a graded ideal of $R$.
\end{proof}

\begin{lem}\label{Lemma 2.13}Let $R_{1}$ and $R_{2}$ be $G$-graded rings, $R = R_{1}\times R_{2}$ and $s = (s_{1}, s_{2})\in h^*(R)$. Suppose that $P = P_{1}\times P_{2}$ is a graded ideal of $R$. Then $P\in GSpec_{s}(R)$ if and only if $ P_{1}\in GSpec_{s_{1}}(R_{1})$ and $ s_2\in P_2$ or  $P_{2}\in GSpec_{s_{2}}(R_{2})$ and $s_1\in P_1$.
\end{lem}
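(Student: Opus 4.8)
The plan is to reduce the statement to the already-established characterization of graded $s$-prime ideals in terms of products of ideals, namely Corollary~\ref{Corollary 2.6}, and then exploit the product structure of $R=R_1\times R_2$. The key observation is that every graded ideal of $R$ decomposes as $I=I_1\times I_2$ (by Lemma~\ref{3}), so a containment $IJ\subseteq P$ with $I=I_1\times I_2$, $J=J_1\times J_2$, $P=P_1\times P_2$ breaks into the two coordinatewise containments $I_1J_1\subseteq P_1$ and $I_2J_2\subseteq P_2$. Since $s=(s_1,s_2)\in h^*(R)$, the conditions $sI\subseteq P$ and $s(N:_RM)$-type containments likewise decouple into their two coordinates. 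So the proof is essentially bookkeeping across the two factors, with the asymmetry $s_2\in P_2$ (resp.\ $s_1\in P_1$) arising from the requirement $s\notin P$, which forces exactly one coordinate of $s$ to lie outside the corresponding $P_i$.

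\textbf{Forward direction.} First I would assume $P=P_1\times P_2\in GSpec_s(R)$. The condition $s\notin P$ means $s_1\notin P_1$ or $s_2\notin P_2$; by symmetry suppose $s_1\notin P_1$. I claim then $P_1\in GSpec_{s_1}(R_1)$ and $s_2\in P_2$. For the first assertion, given graded ideals $I_1,J_1$ of $R_1$ with $I_1J_1\subseteq P_1$, form $I=I_1\times R_2$ and $J=J_1\times R_2$; then $IJ=I_1J_1\times R_2$, which need not sit inside $P$, so instead I would work with $I=I_1\times P_2$ and $J=J_1\times R_2$ (or simply with $I_1\times 0$, $J_1\times 0$) to get $IJ\subseteq P$, apply Corollary~\ref{Corollary 2.6} to conclude $sI\subseteq P$ or $sJ\subseteq P$, and read off the first coordinate to obtain $s_1I_1\subseteq P_1$ or $s_1J_1\subseteq P_1$. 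For $s_2\in P_2$, I would test the product $I=0\times R_2$, $J=0\times R_2$, giving $IJ=0\times R_2\subseteq P$ only if $R_2\subseteq P_2$, which fails; the cleaner route is to use the factorization directly, noting that if $s_2\notin P_2$ the element $(1,0)(0,1)$-type test in the two coordinates forces a contradiction with $s_1\notin P_1$, since $s\notin P$ cannot fail in both coordinates while $P$ remains $s$-prime. I would pin down the correct test ideals so that Corollary~\ref{Corollary 2.6} delivers exactly $s_2\in P_2$.

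\textbf{Converse.} Here I assume (say) $P_1\in GSpec_{s_1}(R_1)$ and $s_2\in P_2$. Then $s\notin P$ because $s_1\notin P_1$ (the definition of graded $s_1$-prime for $P_1$ gives $s_1\notin P_1$). To verify $P\in GSpec_s(R)$ via Corollary~\ref{Corollary 2.6}, take graded ideals $I=I_1\times I_2$, $J=J_1\times J_2$ with $IJ\subseteq P$, so $I_1J_1\subseteq P_1$. By hypothesis $s_1I_1\subseteq P_1$ or $s_1J_1\subseteq P_1$; in the first case $sI=(s_1I_1)\times(s_2I_2)\subseteq P_1\times P_2=P$ since $s_2\in P_2$ forces $s_2I_2\subseteq P_2$, and symmetrically in the second case $sJ\subseteq P$. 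Hence $sI\subseteq P$ or $sJ\subseteq P$, so $P\in GSpec_s(R)$ by Corollary~\ref{Corollary 2.6}.

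\textbf{The main obstacle} I anticipate is the forward direction, specifically handling the disjunctive condition $s\notin P$ correctly and confirming that it forces the ``and'' clauses ($s_2\in P_2$, respectively $s_1\in P_1$) rather than merely the $GSpec_{s_i}(R_i)$ membership. The subtlety is choosing test ideals whose product lands in $P$ yet whose $s$-image cannot lie in $P$ in the ``wrong'' coordinate; one must be careful that the factor containing a unit-like full ideal $R_i$ does not collapse the test. I expect the resolution to hinge on testing with products of the form $(0\times R_2)(R_1\times 0)=0$ and similar degenerate ideals to isolate the coordinate where $s$ is forced into $P$.
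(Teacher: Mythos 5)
Your proposal is correct, but it runs through Corollary \ref{Corollary 2.6} at the level of graded ideals, whereas the paper argues directly with homogeneous elements: it tests $(1,0)(0,1)=(0,0)\in P$ to get $s_1\in P_1$ or $s_2\in P_2$, invokes $s\notin P$ to fix which coordinate, and then tests elements $(0,a)(0,b)$ to transfer $s$-primeness to the surviving factor. Your test ideals are the exact ideal-theoretic avatars of those elements: $(R_1\times 0)(0\times R_2)=0\subseteq P$ is the paper's idempotent computation and does deliver $s_2\in P_2$ once $s_1\notin P_1$ is assumed, so the step you flag as needing to be ``pinned down'' is already resolved by the degenerate product you name at the end; likewise $I_1\times 0$, $J_1\times 0$ cleanly isolates the first coordinate. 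What your route buys: in the converse you quantify over graded ideals $I_1\times I_2$, $J_1\times J_2$ rather than over homogeneous elements, which sidesteps the subtlety (glossed over in the paper's converse) that a homogeneous element of $R_1\times R_2$ is a pair of homogeneous elements of the \emph{same} degree, not an arbitrary pair from $h(R_1)\times h(R_2)$. The price is that you must know every graded ideal of $R_1\times R_2$ splits as $I_1\times I_2$; Lemma \ref{3} only states that such products are graded ideals, not that all graded ideals arise this way, so you should add the one-line argument (decompose via the central homogeneous idempotents $(1,0)$ and $(0,1)$) or restrict Corollary \ref{Corollary 2.6} to the test ideals you actually construct. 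Neither gap is substantive; the proof goes through.
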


\begin{proof}Suppose that $P\in GSpec_{S}(R)$. By Lemma \ref{3}, $P_{1}$ is a graded ideal of $R_{1}$ and $P_{2}$ is a graded ideal of $R_{2}$. Now, $(1, 0), (0, 1)\in h(R)$ with $(1, 0)(0, 1) = (0, 0)\in P$, and so  $s(1, 0) = (s_{1}, 0)\in P$ or $s(0, 1) = (0, s_{2})\in P$. Thus $ s_{1}\in P_{1}$ or $ s_{2}\in P_{2}$.
 We may assume that $ s_{1}\in P_{1}$. Since  or $ s \notin P$, we have $ s_{2}\notin P_{2}$. Let $ab\in P_{2}$ for some $a, b\in h(R_{2})$. Since $(0, a)(0, b)\in P$ and $P\in GSpec_{S}(R)$ , we have either $s(0, a) = (0, s_{2}a)\in P$ or $s(0, b) = (0, s_{2}b)\in P$ and then $s_{2}a\in P_{2}$ or $s_{2}b\in P_{2}$. Therefore, $P_{2}\in GSpec_{S_{2}} (R_{2})$.
 The other case can be treated in the same way.

  Conversely, assume that $ s_{1}\in P_{1}$ and $P_{2}\in GSpec_{s_{2}} (R_{2})$, and so $s\notin P$.  Let $(a, b)(c, d) = (ac, bd)\in P$ for some $a, c\in h(R_{1})$ and $b, d\in h(R_{2})$. Then $bd\in P_{2}$ and so  $s_{2}b\in P_{2}$ or $s_{2}d\in P_{2}$. Now we have  $s(a, b) = (s_{1}a, s_{2}b)\in P$ or
$s(c, d) = (s_{1}c, s_{2}d)\in P$. Therefore, $P\in GSpec_{S}(R)$.  Likewise, one can show that $P\in GSpec_{S}(R)$.
\end{proof}

Let $M_{1}$ be a $G$-graded $R_{1}$-module, $M_{2}$ be a $G$-graded $R_{2}$-module and $R=R_{1}\times R_{2}$. Then $M=M_{1}\times M_{2}$ is $G$-graded $R$-module with $M_{g}=(M_{1})_{g}\times(M_{2})_{g}$ for all $g\in G$ (see \cite{Nastasescue}). Similarly to Lemma \ref{3}, one can prove the following:

\begin{lem}\label{4}Let $M_{1}$ be a $G$-graded $R_{1}$-module, $M_{2}$ be a $G$-graded $R_{2}$-module, $R=R_{1}\times R_{2}$ and $M=M_{1}\times M_{2}$. Then $L=N\times K$ is a graded of $R$-submodule of $M$ if and only if $N$ is a graded $R_{1}$-submodule of $M_{1}$ and $K$ is a graded $R_{2}$-submodule of $M_{2}$.
\end{lem}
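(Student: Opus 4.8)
The plan is to mirror the proof of Lemma \ref{3} almost verbatim, replacing ideals by submodules and the ring multiplication by the module action, since the only structural fact being exploited is that the homogeneous component of degree $g$ in the product decomposes coordinatewise. Recall that $M$ is graded by $M_g = (M_1)_g \times (M_2)_g$, so for any $(x,y) \in M$ the uniqueness of the homogeneous decomposition gives $(x,y)_g = (x_g, y_g)$ for every $g \in G$; this identity is the workhorse of both implications.

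First I would prove the forward direction. Assuming $L = N \times K$ is a graded $R$-submodule of $M$, the coordinatewise description of the $R = R_1 \times R_2$ action shows immediately that $N$ is an $R_1$-submodule of $M_1$ and $K$ is an $R_2$-submodule of $M_2$. To see that $N$ is graded, I would take $x \in N$, observe that $(x,0) \in L$, and use that $L$ is graded to conclude $(x,0)_g = (x_g, 0) \in L$ for all $g$; reading off the first coordinate yields $x_g \in N$ for every $g$, so $N = \bigoplus_{g\in G}(N \cap (M_1)_g)$. The symmetric argument applied to $(0,y)$ with $y \in K$ shows that $K$ is graded.

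For the converse I would assume $N$ and $K$ are graded and note that $L = N \times K$ is plainly an $R$-submodule of $M$. Given $(x,y) \in L$, so that $x \in N$ and $y \in K$, gradedness of $N$ and $K$ gives $x_g \in N$ and $y_g \in K$ for all $g$, hence $(x,y)_g = (x_g, y_g) \in N \times K = L$, which is exactly the statement that $L$ is graded.

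There is no genuine obstacle here: the result is a direct transcription of Lemma \ref{3}, and the only point requiring (minor) care is the identification $(x,y)_g = (x_g, y_g)$, which follows from the definition $M_g = (M_1)_g \times (M_2)_g$ together with the uniqueness of homogeneous components. Everything else is routine bookkeeping, which is presumably why the authors content themselves with the remark that the proof is similar to that of Lemma \ref{3}.
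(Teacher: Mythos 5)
Your proposal is correct and matches the paper's intent exactly: the paper gives no separate proof of this lemma, stating only that it follows ``similarly to Lemma \ref{3}'', and your argument is precisely that adaptation, using $(x,0)\in L$ and $(0,y)\in L$ together with the coordinatewise identity $(x,y)_g=(x_g,y_g)$ in both directions. Nothing further is needed.
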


\begin{prop}\label{Theorem 2.14}Suppose that $M = M_{1}\times M_{2}$ is a graded $R$-module and $s = (s_{1}, s_{2})\in h^*(R)$, where $M_{i}$ is a $G$-graded $R_{i}$-module, $R = R_{1}\times R_{2}$. Assume that $L = N\times K$ is a graded $R$-submodule of $M$. Then $L\in GSpec_{S}(_{R}M)$ if and only if $N\in GSpec_{s_{1}} (_{R_{1}}M_{1})$ and $s_{2}\in(K :_{R_{2}} M_{2})$ or  $K\in GSpec_{s_{2}} (_{R_{2}}M_{2})$ and $s_{1}\in(N :_{R_{1}} M_{1})$.
\end{prop}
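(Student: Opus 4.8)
The plan is to mirror the proof of Lemma \ref{Lemma 2.13}, exploiting the coordinatewise description of the data on $R=R_1\times R_2$ and $M=M_1\times M_2$. The first step is the key bookkeeping identity
\[
(L:_R M)=(N:_{R_1}M_1)\times(K:_{R_2}M_2),
\]
which follows directly from $M_g=(M_1)_g\times(M_2)_g$ together with Lemma \ref{4}, the latter also guaranteeing that $N$ and $K$ are graded submodules of $M_1$ and $M_2$. In particular the hypothesis $s\notin(L:_R M)$ translates into the statement that at least one of $s_1\in(N:_{R_1}M_1)$ or $s_2\in(K:_{R_2}M_2)$ fails, so the two alternatives in the conclusion are genuinely mutually exclusive.

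For the forward implication, suppose $L\in GSpec_{s}(_R M)$. I would first force the dichotomy by testing the definition on carefully chosen homogeneous elements supported in a single factor. Taking $r=(1,0)\in R_e$ and $m=(0,m_2)$ for an arbitrary $m_2\in h(M_2)$, one has $rm=(0,0)\in L$, so either $sr=(s_1,0)\in(L:_R M)$, i.e. $s_1\in(N:_{R_1}M_1)$, or $sm=(0,s_2m_2)\in L$, i.e. $s_2m_2\in K$. Hence if $s_1\notin(N:_{R_1}M_1)$ then $s_2m_2\in K$ for every $m_2\in h(M_2)$, giving $s_2\in(K:_{R_2}M_2)$; thus at least one coordinate condition holds. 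Assume, say, $s_1\in(N:_{R_1}M_1)$; by the identity above and $s\notin(L:_R M)$ this forces $s_2\notin(K:_{R_2}M_2)$. To see $K\in GSpec_{s_2}(_{R_2}M_2)$, take $a\in h(R_2)$ and $m_2\in h(M_2)$ with $am_2\in K$; applying the defining property of $L\in GSpec_{s}(_R M)$ to $r=(0,a)$ and $m=(0,m_2)$, for which $rm=(0,am_2)\in L$, yields $s_2a\in(K:_{R_2}M_2)$ or $s_2m_2\in K$, which is exactly the $s_2$-prime condition for $K$. The symmetric case $s_2\in(K:_{R_2}M_2)$ is handled identically.

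The converse is a direct verification. Assuming, say, $K\in GSpec_{s_2}(_{R_2}M_2)$ and $s_1\in(N:_{R_1}M_1)$, the fact that $s_2\notin(K:_{R_2}M_2)$ gives $s\notin(L:_R M)$ through the product identity. Given homogeneous $r=(a,b)$ and $m=(m_1,m_2)$ with $rm=(am_1,bm_2)\in L$, one has $bm_2\in K$, so the $s_2$-prime property of $K$ supplies $s_2b\in(K:_{R_2}M_2)$ or $s_2m_2\in K$. Since $s_1\in(N:_{R_1}M_1)$ already forces $s_1a\in(N:_{R_1}M_1)$ and $s_1m_1\in N$, in the first case $sr=(s_1a,s_2b)\in(L:_R M)$ and in the second $sm=(s_1m_1,s_2m_2)\in L$, establishing the $s$-prime condition for $L$.

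The computations are routine; the only point demanding care is the homogeneity bookkeeping. I would check at each step that the test elements are genuinely homogeneous in the product grading --- $(1,0)\in R_e$, $(0,a)\in R_g$ when $a\in(R_2)_g$, and $(0,m_2)\in M_g$ when $m_2\in(M_2)_g$ --- so that the defining property of graded $s$-primeness may legitimately be invoked. This is the main (minor) obstacle, since the whole argument rests on reducing the single test of $s$-primeness on $M$ to one-sided tests on $M_1$ and $M_2$ via homogeneous elements supported in a single factor.
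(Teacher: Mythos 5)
Your proof is correct and follows essentially the same coordinatewise strategy as the paper's; the only differences are that you force the dichotomy $s_1\in(N:_{R_1}M_1)$ or $s_2\in(K:_{R_2}M_2)$ directly with the test pair $(1,0)$, $(0,m_2)$ rather than by citing Proposition \ref{Proposition 2.9} and Lemma \ref{Lemma 2.13}, and that you verify the $s_2$-primeness of $K$ with the scalar $(0,a)$ where the paper uses $(1,r)$. The latter choice is in fact a small improvement: $(1,r)$ is homogeneous in $R_1\times R_2$ only when $r\in(R_2)_e$, whereas $(0,a)$ is homogeneous for every homogeneous $a$, which is exactly the bookkeeping point you flag at the end.
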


\begin{proof}Suppose that $L\in GSpec_{s}(_{R}M)$. By Lemma \ref{4}, $N$ is a graded $R$-submodule of $M_{1}$ and $K$ is a graded $R$-submodule of $M_{2}$. Now, by Proposition \ref{Proposition 2.9} (1), $(L :_{R} M) = (N :_{R_{1}} M_{1})\times(K :_{R_{2}}M_{2})\in GSpec_{S}(R)$ and so by Lemma \ref{Lemma 2.13}, either $s_{1}\in(N :_{R_{1}} M_{1})$ or $s_{2}\in(K :_{R_{2}} M_{2})$.
 We may assume that $s_{1}\in(N :_{R_{1}} M_{1})$.
 Now, we show that $K\in GSpec_{s_{2}} (_{R_{2}}M_{2})$. Let $rm\in K$ for some $r\in h(R_{2})$, $m\in h(M_{2})$. Then $(1, r)\in h(R)$ and $(0, m)\in h(M)$ such that $(1, r)(0, m) = (0, rm)\in L$. Since $L\in GSpec_{s}(_{R}M)$, we have  $s(1, r) = (s_{1}, s_{2}r)\in(L :_{R} M)$ or $s(0, m) = (0, s_{2}m)\in L$. This implies that $s_{2}r\in (K :_{R_{2}} M_{2})$ or $s_{2}m\in K$. Therefore, $K\in GSpec_{s_{2}} (_{R_{2}}M_{2})$. In the other case, it can be similarly proved that $N\in GSpec_{s_{1}} (_{R_{1}}M_{1})$.

Conversely, suppose that $s_{1}\in(N :_{R_{1}} M_{1})$ and $K\in GSpec_{s_{2}} (_{R_{2}}M_{2})$, and so $s\notin (L :_{R} M) = (N :_{R_{1}} M_{1})\times(K :_{R_{2}}M_{2})$.
   Suppose that $(r_{1}, r_{2})(m_{1}, m_{2}) = (r_{1}m_{1}, r_{2}m_{2})\in L$ for some $(r_{1}, r_{2})\in h(R)$, $(m_{1}, m_{2})\in h(M)$. Then $r_{2}m_{2}\in K$, and since $K\in GSpec_{s_{2}} (_{R_{2}}M_{2})$, we have $s_{2}r_{2}\in (K :_{R_{2}} M_{2})$ or $s_{2}m_{2}\in K$. Therefore $s(r_{1}, r_{2}) = (s_{1}r_{1}, s_{2}r_{2})\in(L :_{R} M)$ or $s(m_{1}, m_{2}) = (s_{1}m_{1}, s_{2}m_{2})\in N\times K =L$. Hence, $L\in GSpec_{s}(_{R}M)$. Similarly one can show that if $N\in GSpec_{s_{1}} (_{R_{1}}M_{1})$ and $s_2 \in(K :_{R_{2}} M_{2})$, then $L\in GSpec_{s}(_{R}M)$.
\end{proof}

Now, using induction and Proposition \ref{Theorem 2.14}, we get the following general result:

\begin{prop}\label{Theorem 2.15}Suppose that $M = M_{1}\times...\times M_{n}$ is a graded $R$-module and $s = (s_{1},..., s_{n})\in h^*(R)$, where $M_{i}$ is a $G$-graded $R_{i}$-module, $R = R_{1}\times...\times R_{n}$. Assume that $N = N_{1}\times...\times N_{n}$ is a graded $R$-submodule of $M$. Then $N\in GSpec_{s}(_{R}M)$ if and only if $N_{i}\in GSpec_{s_{i}} (_{R_{i}}M_{i})$ for some $i$, and $s_{j}\in (N_{j} :_{R_{j}} M_{j}) $ for all $j\neq i$.
\end{prop}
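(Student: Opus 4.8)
The plan is to induct on $n$, treating Proposition \ref{Theorem 2.14} both as the base case and as the inductive engine. The case $n=1$ holds vacuously: there $N=N_1$, there is no index $j\neq 1$ to constrain, and the statement collapses to the triviality that $N_1\in GSpec_{s_1}(_{R_1}M_1)$ is equivalent to itself. The case $n=2$ is precisely Proposition \ref{Theorem 2.14}. So I may assume $n\geq 3$ and that the statement has been established for all products of $n-1$ factors.

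For the inductive step I would regroup the product by peeling off the last factor. Write $R'=R_1\times\cdots\times R_{n-1}$, $M'=M_1\times\cdots\times M_{n-1}$, $N'=N_1\times\cdots\times N_{n-1}$, and $s'=(s_1,\ldots,s_{n-1})$, so that $R=R'\times R_n$, $M=M'\times M_n$, $N=N'\times N_n$, and $s=(s',s_n)$. Applying Proposition \ref{Theorem 2.14} to the two-factor decomposition $M=M'\times M_n$ gives that $N\in GSpec_s(_RM)$ if and only if either (i) $N'\in GSpec_{s'}(_{R'}M')$ and $s_n\in(N_n:_{R_n}M_n)$, or (ii) $N_n\in GSpec_{s_n}(_{R_n}M_n)$ and $s'\in(N':_{R'}M')$. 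Here I would use the componentwise description of the colon ideal of a product, namely $(N':_{R'}M')=(N_1:_{R_1}M_1)\times\cdots\times(N_{n-1}:_{R_{n-1}}M_{n-1})$, which was already invoked inside the proof of Proposition \ref{Theorem 2.14}; thus the condition $s'\in(N':_{R'}M')$ is equivalent to $s_j\in(N_j:_{R_j}M_j)$ for every $j\leq n-1$.

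It then remains to unwind the two alternatives. In case (i), the induction hypothesis applied to $N'$ produces an index $i\leq n-1$ with $N_i\in GSpec_{s_i}(_{R_i}M_i)$ and $s_j\in(N_j:_{R_j}M_j)$ for all $j\neq i$ with $j\leq n-1$; adjoining the hypothesis $s_n\in(N_n:_{R_n}M_n)$ upgrades this to $s_j\in(N_j:_{R_j}M_j)$ for all $j\neq i$ in $\{1,\ldots,n\}$, which is exactly the target conclusion with distinguished index $i\leq n-1$. In case (ii), the condition $s'\in(N':_{R'}M')$ yields $s_j\in(N_j:_{R_j}M_j)$ for all $j\leq n-1$, and together with $N_n\in GSpec_{s_n}(_{R_n}M_n)$ this is the target conclusion with distinguished index $i=n$. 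Conversely, any instance of the target condition falls into case (i) when its distinguished index satisfies $i\leq n-1$ and into case (ii) when $i=n$, so the equivalence closes in both directions.

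The step I expect to require the most care is purely the index bookkeeping: verifying that the disjunction ``$i\leq n-1$ or $i=n$'' coming out of Proposition \ref{Theorem 2.14} matches exactly the single existential ``for some $i\in\{1,\ldots,n\}$'' in the statement, and that splicing the partial colon-membership conditions from the induction hypothesis together with the extra condition on the split-off factor reconstitutes the full family $\{\,s_j\in(N_j:_{R_j}M_j):j\neq i\,\}$ with no gaps and no double counting. No new module-theoretic input is needed beyond Proposition \ref{Theorem 2.14} and the product decomposition of colon ideals.
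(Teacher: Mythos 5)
Your proposal is correct and matches the paper's intended argument exactly: the paper gives no details beyond the remark that the result follows ``using induction and Proposition \ref{Theorem 2.14},'' and your write-up is precisely that induction, peeling off one factor and using the componentwise description of the colon ideal. The only point left implicit (in the paper's Proposition \ref{Theorem 2.14} as much as in your argument) is that a component of $s$ such as $s'$ could vanish, in which case the corresponding $GSpec_{s'}$ branch is vacuous and one lands automatically in the other alternative; this does not affect the conclusion.
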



Let $M$ be an $R$-module. The idealization $R(+)M=\left\{  (r,m):r\in
R\mbox{ and }m\in M\right\}  $ of $M$ is a commutative ring with componentwise
addition and multiplication; $(x,m_{1})+(y,m_{2})=(x+y,m_{1}+m_{2})$ and
$(x,m_{1})(y,m_{2})=(xy,xm_{2}+ym_{1})$ for each $x,y\in R$ and $m_{1}%
,m_{2}\in M$. Let $G$ be an abelian group and $M$ be a $G$-graded $R$-module.
Then $X=R(+)M$ is $G$-graded by $X_{g}=R_{g}(+)M_{g}$ for all $g\in G$. Note
that, $X_{g}$ is an additive subgroup of $X$ for all $g\in G$. Also, for
$g,h\in G$, $X_{g}X_{h}=(R_{g}(+)M_{g})(R_{h}(+)M_{h})=(R_{g}R_{h},R_{g}%
M_{h}+R_{h}M_{g})\subseteq(R_{gh},M_{gh}+M_{hg})\subseteq(R_{gh}%
,M_{gh})=X_{gh}$ as $G$ is abelian, (see \cite{RaTeShKo}). If $S$ is a m.c.s. of $h(R)$ and $N$ is a graded $R$-submodule of $M$, then
$S(+)N$ is a m.c.s. of $h(R(+)M)$.

\begin{lem}
\label{6}Let $G$ be an abelian group, $M$ be a $G$-graded $R$-module, $P$ be
an ideal of $R$ and $N$ be an $R$-submodule of $M$ such that $PM\subseteq N$.
Then $P(+)N$ is a graded ideal of $R(+)M$ if and only if $P$ is a graded ideal
of $R$ and $N$ is a graded $R$-submodule of $M$.
\end{lem}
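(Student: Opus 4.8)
The plan is to prove the biconditional directly by unwinding the grading on the idealization $X=R(+)M$, whose homogeneous component of degree $g$ is $X_g=R_g(+)M_g$. The decisive observation is that for any $(x,m)\in X$, writing $x=\sum_{g\in G}x_g$ in $R$ and $m=\sum_{g\in G}m_g$ in $M$, the degree-$g$ homogeneous component of $(x,m)$ is exactly $(x_g,m_g)\in R_g(+)M_g=X_g$. Hence membership of a homogeneous component of $(x,m)$ in $P(+)N$ is equivalent to the two coordinatewise conditions $x_g\in P$ and $m_g\in N$. This reduces the entire statement to tracking the $R$-coordinate and the $M$-coordinate separately, exactly in the spirit of Lemma \ref{3}. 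Before discussing homogeneity I would record that $P(+)N$ is genuinely an ideal of $X$, which is where the hypothesis $PM\subseteq N$ is used: for $(r,m)\in X$ and $(p,n)\in P(+)N$ one has $(r,m)(p,n)=(rp,\,rn+mp)$ with $rp\in P$, $rn\in N$, and $mp=pm\in PM\subseteq N$, so the product lies in $P(+)N$. This makes ``graded ideal'' a meaningful hypothesis on both sides.

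For the reverse implication I would assume $P$ is a graded ideal of $R$ and $N$ a graded $R$-submodule of $M$. Given $(p,n)\in P(+)N$, gradedness of $P$ and of $N$ yields $p_g\in P$ and $n_g\in N$ for all $g\in G$, so every homogeneous component $(p_g,n_g)$ of $(p,n)$ lies in $P(+)N$; therefore $P(+)N$ decomposes as required and is a graded ideal of $X$.

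For the forward implication I would assume $P(+)N$ is a graded ideal of $X$ and recover the two pieces by testing on the special elements $(p,0)$ and $(0,n)$. If $p\in P$ then $(p,0)\in P(+)N$, and gradedness forces each component $(p_g,0)\in P(+)N$, whence $p_g\in P$ for all $g$; so $P$ is a graded ideal of $R$. Symmetrically, for $n\in N$ the element $(0,n)\in P(+)N$ has homogeneous components $(0,n_g)\in P(+)N$, giving $n_g\in N$ for all $g$, so $N$ is a graded $R$-submodule of $M$. I do not expect a serious obstacle in this argument; the only points needing a touch of care are verifying that $PM\subseteq N$ is precisely what legitimizes $P(+)N$ as an ideal, and confirming that the decomposition $(x,m)=\sum_{g\in G}(x_g,m_g)$ is the homogeneous decomposition with respect to $X_g=R_g(+)M_g$. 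Both are immediate from the definition of the grading on $R(+)M$ recalled just before the statement.
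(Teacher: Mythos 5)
Your argument is correct, but it is a genuinely different route from the paper's: the paper disposes of this lemma in one line by citing \cite[Proposition 3.3]{RaTeShKo}, whereas you give a self-contained direct verification. The two key points you isolate are exactly the right ones. First, the hypothesis $PM\subseteq N$ is what makes $P(+)N$ an ideal at all, via $(r,m)(p,n)=(rp,\,rn+pm)$ with $pm\in PM\subseteq N$; without it the biconditional would not even be well posed. Second, since $X=\bigoplus_{g}\bigl(R_g(+)M_g\bigr)$ as additive groups, the homogeneous decomposition of $(x,m)$ is the coordinatewise one $(x,m)=\sum_{g}(x_g,m_g)$, so membership of all homogeneous components of $(x,m)$ in $P(+)N$ is equivalent to $x_g\in P$ and $m_g\in N$ for all $g$. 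From there the reverse direction is immediate, and the forward direction follows from testing on $(p,0)$ and $(0,n)$, precisely as in the product decomposition of Lemma \ref{3}. What your approach buys is transparency and independence from the external reference (and it makes visible that only the gradedness, not the ideal/submodule property, is actually at stake in the equivalence); what the paper's citation buys is brevity and consistency with the source where the idealization grading is developed. No gap.
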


\begin{proof}
Follows from \cite[Proposition 3.3]{RaTeShKo}.
\end{proof}

\begin{prop}\label{Proposition 2.21}Let $G$ be an abelian group, $M$ be a $G$-graded ring, $s\in h^*(R)$, and $P$ be a graded ideal of $R$ with $s\notin P$. Then the following are equivalent:
\begin{enumerate}
\item $P\in GSpec_{s}(R)$.

\item $P(+)M\in GSpec_{(s,m)}(R(+)M)$, for every $m\in h(M)$.

\item $P(+)M\in GSpec_{(s,0)}(R(+)M)$.
\end{enumerate}
\end{prop}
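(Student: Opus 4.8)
The plan is to prove the cyclic chain of implications $(1)\Rightarrow(2)\Rightarrow(3)\Rightarrow(1)$, exploiting the idealization structure throughout. Before anything else I would record the key computation of the colon ideal: for a graded ideal $P$ of $R$ with $PM\subseteq P(+)M$ (which holds trivially since $PM\subseteq M$), one checks that $\bigl(P(+)M :_{R(+)M} R(+)M\bigr)=P(+)M$. Indeed $(r,m)\in\bigl(P(+)M :_{R(+)M} R(+)M\bigr)$ means $(r,m)(R(+)M)\subseteq P(+)M$; testing against $(1,0)$ gives $(r,m)\in P(+)M$, and conversely $P(+)M$ times anything lands in $(Pr',\,Pm'+rM)\subseteq P(+)M$ because $PM\subseteq N$ with $N=M$ here. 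This identity is what makes the condition $s\notin P$ translate into $(s,m)\notin\bigl(P(+)M :_{R(+)M} R(+)M\bigr)$, so the non-membership hypothesis carries over to each relevant element.

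For $(1)\Rightarrow(2)$, fix $m\in h(M)$ and set $s'=(s,m)$; note $s'\in h^*(R(+)M)$ since $s\in h^*(R)$ and $R(+)M$ is graded by $X_g=R_g(+)M_g$. Take homogeneous elements $(r,x),(r',x')\in h(R(+)M)$ with $(r,x)(r',x')=(rr',\,rx'+r'x)\in P(+)M$; the point is that membership in $P(+)M$ only constrains the first coordinate, giving $rr'\in P$. Since $P\in GSpec_s(R)$ and $r,r'\in h(R)$, I would conclude $sr\in P$ or $sr'\in P$. In the first case $s'(r,x)=(sr,\,sx+mr)$ has first coordinate $sr\in P$, hence lies in $P(+)M$ (the second coordinate is automatically in $M$); symmetrically in the second case. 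Thus $P(+)M\in GSpec_{s'}(R(+)M)$. The implication $(2)\Rightarrow(3)$ is immediate by specializing $m=0$, which is a legitimate choice since $0\in h(M)$.

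The substantive direction is $(3)\Rightarrow(1)$, and I expect the only real subtlety to be bookkeeping between the graded ring $R$ and its idealization. Assume $P(+)M\in GSpec_{(s,0)}(R(+)M)$. First, $s\notin P$ follows from $(s,0)\notin\bigl(P(+)M :_{R(+)M} R(+)M\bigr)=P(+)M$. Now take $r,r'\in h(R)$ with $rr'\in P$. Then $(r,0)(r',0)=(rr',0)\in P(+)M$ with both factors homogeneous in $R(+)M$, so by hypothesis $(s,0)(r,0)=(sr,0)\in P(+)M$ or $(s,0)(r',0)=(sr',0)\in P(+)M$; reading off first coordinates gives $sr\in P$ or $sr'\in P$. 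By Corollary \ref{Corollary 2.6}, applied after passing to the ideal-theoretic characterization of $GSpec_s$, this establishes $P\in GSpec_s(R)$. The main obstacle, such as it is, is making sure every element I feed into the idealization is genuinely homogeneous and that the colon-ideal identity is invoked correctly so that the hypothesis $s\notin P$ and its idealization counterpart stay in lockstep; once Lemma \ref{6} guarantees $P(+)M$ is a graded ideal and the colon computation is in hand, each implication reduces to reading off first coordinates.
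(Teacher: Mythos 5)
Your proposal is correct and follows essentially the same route as the paper: the cycle $(1)\Rightarrow(2)\Rightarrow(3)\Rightarrow(1)$, with each implication reduced to reading off first coordinates of products in $R(+)M$; your explicit computation $\bigl(P(+)M :_{R(+)M} R(+)M\bigr)=P(+)M$ just makes precise what the paper leaves implicit (and the appeal to Corollary~\ref{Corollary 2.6} at the end is unnecessary, since you have already verified the elementwise definition directly).
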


\begin{proof}$(1)\Rightarrow(2):$  Suppose that $P\in GSpec_{s}(R)$. Let $(x, a)(y, b) = (xy, xb + ya)\in P(+)M$ for some $x, y\in h(R)$, $a, b\in h(M)$. Then we have $xy\in P$. Since $P\in GSpec_{s}(R)$, we get $sx\in P$ or $sy\in P$, and so $(s,m)(x, a) \in P(+)M$ or $(s,m)(y, b) =\in P(+)M$. Therefore, $P(+)M\in GSpec_{S(+)\{0\}}(R(+)M)$.

$(2)\Rightarrow(3):$ Trivial.

$(3)\Rightarrow(1):$ Suppose that $P(+)M\in GSpec_{(s,0)}(R(+)M)$. Let $xy\in P$ for some $x, y\in h(R)$. Then $(x, 0)(y, 0)\in P(+)M$. Since $P(+)M\in GSpec_{(s,0)}(R(+)M)$, we have $(s,0)(x, 0) =(sx, 0)\in P(+)M$ or $(s,0)(y,  0) = (sy, 0)\in P(+)M$, and hence  $sx\in P$ or $sy\in P$. Thus, $P\in GSpec_{s}(R)$.
\end{proof}

\section{Existence of graded $s$-prime submodules} \label{sec 3}

In this section we will tackle the problem of existence of $s$-prime modules.

\begin{lem}\label{5} Let $M$ be a $G$-graded $R$-module and $N$ be a graded $R$-submodule of $M$. If $r\in h(R)$, then $(N:_{M}r)=\left\{m\in M: rm\in N\right\}$ is a graded $R$-submodule of $M$.
\end{lem}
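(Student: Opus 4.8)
The plan is to verify the two defining requirements of a graded submodule separately: first that $(N:_{M}r)$ is an $R$-submodule of $M$, and then that it is graded in the sense that $(N:_{M}r)=\bigoplus_{g\in G}\left((N:_{M}r)\cap M_{g}\right)$.

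For the submodule property I would take $m_{1},m_{2}\in (N:_{M}r)$, so that $rm_{1},rm_{2}\in N$; since $N$ is an $R$-submodule, $r(m_{1}+m_{2})=rm_{1}+rm_{2}\in N$, and for any $a\in R$ we have $r(am_{1})=a(rm_{1})\in N$. Hence $m_{1}+m_{2}$ and $am_{1}$ lie in $(N:_{M}r)$. This part is routine and uses only that $N$ is a submodule.

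The substantive part is showing gradedness. Here I would fix the degree $h\in G$ with $r\in R_{h}$, take an arbitrary $m\in (N:_{M}r)$, and write its homogeneous decomposition $m=\sum_{g\in G}m_{g}$ with $m_{g}\in M_{g}$. The key observation is that, since $r$ is homogeneous of degree $h$, we have $rm_{g}\in R_{h}M_{g}\subseteq M_{hg}$, so the terms of $rm=\sum_{g\in G}rm_{g}$ lie in pairwise distinct homogeneous degrees, because left multiplication by $h$ is a bijection of $G$. Consequently $rm_{g}$ is precisely the homogeneous component of $rm$ in degree $hg$. Since $rm\in N$ and $N$ is graded, every homogeneous component of $rm$ lies in $N$; in particular $rm_{g}\in N$, i.e.\ $m_{g}\in (N:_{M}r)$, for every $g\in G$. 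This yields the desired decomposition.

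The only delicate point — and the reason the homogeneity hypothesis $r\in h(R)$ cannot be dropped — is the claim that $rm_{g}$ is the genuine degree-$hg$ component of $rm$. This rests on $rm_{g}\in M_{hg}$ together with the injectivity of $g\mapsto hg$, which guarantees that no mixing or cancellation between distinct components can occur; for an inhomogeneous $r$ the products $rm_{g}$ would spread across several degrees and the argument would break down. Once this observation is in place, the conclusion follows directly from the definition of a graded $R$-submodule.
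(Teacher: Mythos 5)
Your proposal is correct and follows essentially the same route as the paper: decompose $m=\sum_{g}m_{g}$, use homogeneity of $r\in R_{h}$ to place $rm_{g}$ in $M_{hg}$, and invoke gradedness of $N$ to conclude $rm_{g}\in N$. Your explicit remark that injectivity of $g\mapsto hg$ makes $rm_{g}$ the genuine degree-$hg$ component of $rm$ is a detail the paper leaves implicit, but it is the same argument.
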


\begin{proof} Clearly, $(N:_{M}r)$ is an $R$-submodule of $M$. Let $m\in (N:_{M}r)$. Then $rm\in N$. Now, $m=\displaystyle\sum_{g\in G}m_{g}$ where $m_{g}\in M_{g}$ for all $g\in G$. Since $r\in h(R)$, $r\in R_{h}$ for some $h\in G$ and then $rm_{g}\in R_{h}M_{g}\subseteq M_{hg}\subseteq h(M)$ for all $g\in G$ such that $\displaystyle\sum_{g\in G}rm_{g}=r\left(\displaystyle\sum_{g\in G}m_{g}\right)=rm\in N$. Since $N$ is graded, $rm_{g}\in N$ for all $g\in G$ which implies that $m_{g}\in (N:_{M}r)$ for all $g\in G$. Hence, $(N:_{M}r)$ is a graded $R$-submodule of $M$.
\end{proof}

\begin{lem}\label{Lemma 2.16}Let $M$ be a $G$-graded $R$-module, and $N$ be a graded $R$-submodule of $M$. Suppose that  $s, t\in h^*(R)$ such that $st\notin (N :_{R} M )$ and $N\in GSpec_{s}(_{R}M)$. Then we have the following:
\begin{enumerate}
\item $(N :_{M} t)\subseteq (N :_{M} s)$.\\

\item $((N :_{R} M) :_{R} t)\subseteq ((N :_{R} M) :_{R} s)$.
\end{enumerate}
\end{lem}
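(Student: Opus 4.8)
The plan is to prove both inclusions by the same elementary mechanism: take an element in the left-hand set, feed a suitable product into the defining property of $N\in GSpec_{s}(_{R}M)$, and use the standing hypothesis $st\notin(N:_{R}M)$ to rule out the ``wrong'' alternative. For part (1), I would start with an arbitrary $m\in(N:_{M}t)$. Since $(N:_{M}t)$ is a graded $R$-submodule of $M$ by Lemma \ref{5}, I may pass to homogeneous components and assume $m\in h(M)$; it then suffices to show each component lies in $(N:_{M}s)$. By definition $tm\in N$, and since $t\in h(R)$ and $m\in h(M)$ we have $t m\in N$ with both factors homogeneous, so the $s$-prime property of $N$ applies directly: either $st\in(N:_{R}M)$ or $sm\in N$. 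The first option is excluded by the hypothesis $st\notin(N:_{R}M)$, so we are forced into $sm\in N$, i.e.\ $m\in(N:_{M}s)$. This gives the desired inclusion.

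For part (2), the argument is the module-to-ideal shadow of part (1). Take $r\in((N:_{R}M):_{R}t)$, and by the same graded-component reduction (the ideal $((N:_{R}M):_{R}t)$ being graded) assume $r\in h(R)$. Then $tr\in(N:_{R}M)$, meaning $trM\subseteq N$, so for every homogeneous $m\in h(M)$ we have $t(rm)\in N$ with $t\in h(R)$ and $rm\in h(M)$. Applying the $s$-prime property to the product $t\cdot(rm)\in N$ yields $st\in(N:_{R}M)$ or $s(rm)\in N$; again $st\notin(N:_{R}M)$ is excluded by hypothesis, so $srm\in N$ for all homogeneous $m$. Summing over homogeneous components (exactly as in the proof of Proposition \ref{Proposition 2.9}) gives $srM\subseteq N$, that is $sr\in(N:_{R}M)$, hence $r\in((N:_{R}M):_{R}s)$.

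The only point requiring a little care is the reduction to homogeneous elements: both $(N:_{M}t)$ and $((N:_{R}M):_{R}t)$ must be checked to be graded so that verifying the inclusion on homogeneous components suffices. This is precisely the content of Lemma \ref{5} (and the analogous statement for the graded ideal $(N:_{R}M)$, whose gradedness is recorded in the introduction). I do not expect a genuine obstacle here; the essential mechanism in both parts is a single invocation of the $s$-prime definition together with the blanket assumption $st\notin(N:_{R}M)$, which is exactly what forces the second alternative each time. In fact part (2) can alternatively be deduced from part (1) by noting $\bigl((N:_{R}M):_{R}t\bigr)=\bigl((N:_{M}t):_{R}M\bigr)$, but the direct argument above is cleaner and self-contained.
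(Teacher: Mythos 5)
Your proposal is correct and follows essentially the same route as the paper: for (1) you reduce to homogeneous components via Lemma \ref{5} and then apply the $s$-prime property with the hypothesis $st\notin(N:_{R}M)$ ruling out the first alternative, exactly as in the paper's proof. For (2) the paper simply states that it follows from (1) (via the identity $((N:_{R}M):_{R}t)=((N:_{M}t):_{R}M)$ that you note at the end), whereas you also spell out an equivalent direct argument; both are fine.
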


\begin{proof}
\begin{enumerate}
\item Let $m\in (N :_{M} t)$. Then $m_{g}\in (N:_{M}t)$ for all $g\in G$ as $(N:_{M}t)$ is graded submodule by Lemma \ref{5}, and then $tm_{g}\in N$ for all $g\in G$. Since $N\in GSpec_{s}(_{R}M)$ and $st\notin (N:_{R}M)$, $sm_{g}\in N$ for all $g\in G$, and then $sm\in N$, that is $m\in (N:_{M}s)$.
\item Directly  follows from   (1).
\end{enumerate}
\end{proof}

\begin{prop}\label{Theorem 2.18}Let $M$ be a $G$-graded $R$-module, $N$ be a graded $R$-submodule of $M$, and $s\in h^*(R)$.
\begin{enumerate}
\item If $(N :_{M} s)\in GSpec(_{R}M)$, then $N\in GSpec_{s}(_{R}M)$.\\

\item Suppose that  $s^2\notin (N :_{R} M )$. If  $N\in GSpec_{s}(_{R}M)$ then $(N :_{M} s)\in GSpec(_{R}M)$.
\end{enumerate}

\end{prop}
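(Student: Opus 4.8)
The plan is to treat the two implications separately, both resting on the single reformulation $rm\in(N:_M s)\iff s(rm)\in N$ together with the fact that $(N:_M s)$ is automatically a graded submodule by Lemma \ref{5}.

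For part (1), I would first record that the hypothesis forces $s\notin(N:_RM)$: if $sM\subseteq N$ then every element of $M$ lies in $(N:_M s)$, so $(N:_M s)=M$, contradicting that a graded prime submodule must be proper. Next, given homogeneous $r,m$ with $rm\in N$, the key observation is that $s(rm)\in N$ simply because $N$ is a submodule, and hence $rm\in(N:_M s)$. Applying the primeness of $(N:_M s)$ to the homogeneous product $rm$ gives $r\in\big((N:_M s):_R M\big)$ or $m\in(N:_M s)$. In the first case $rM\subseteq(N:_M s)$, that is $(sr)M\subseteq N$, so $sr\in(N:_RM)$; in the second case $sm\in N$ directly. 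Either way $N\in GSpec_s(_RM)$, so I expect part (1) to be routine.

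For part (2) I would check first that $(N:_M s)$ is proper (immediate from $s\notin(N:_RM)$, which holds since $N\in GSpec_s(_RM)$) and graded (Lemma \ref{5}), and then take homogeneous $r,m$ with $rm\in(N:_M s)$, i.e. $(sr)m=r(sm)\in N$. Reading this as $r\cdot(sm)\in N$ and applying the $s$-prime property of $N$ yields $sr\in(N:_RM)$ or $s^2m\in N$. The first alternative gives $(sr)M\subseteq N$, hence $rM\subseteq(N:_M s)$ and $r\in\big((N:_M s):_R M\big)$, which is exactly the prime conclusion.

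The hard part will be the surviving alternative $s^2m\in N$, where one needs $m\in(N:_M s)$, i.e. $sm\in N$, and not merely $s^2m\in N$. My intended route is to rewrite $s^2m\in N$ as $m\in(N:_M s^2)$ and to establish the inclusion $(N:_M s^2)\subseteq(N:_M s)$; this is precisely the shape of Lemma \ref{Lemma 2.16}(1) applied with the degree-two element, whose use requires $s\cdot s^2=s^3\notin(N:_RM)$. Since the statement only posits $s^2\notin(N:_RM)$, the delicate point is to upgrade this to the non-vanishing of $s^3$ modulo $(N:_RM)$ (equivalently, to rule out $s^2m\in N$ while $sm\notin N$). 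Here I would try to feed $s^2\notin(N:_RM)$ into the $s$-primeness of the ideal $(N:_RM)$ provided by Proposition \ref{Proposition 2.9}. I expect this power-of-$s$ bookkeeping to be the crux of the argument and the place where the hypothesis $s^2\notin(N:_RM)$ must be exploited in full strength; it is also the step I would scrutinize most carefully, since controlling $s^3$ from $s^2$ alone is not obviously automatic.
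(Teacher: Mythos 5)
Part (1) of your proposal is correct and coincides with the paper's argument (you add the properness check $s\notin(N:_RM)$, which the paper leaves implicit). The problem is part (2): your proposal stops at exactly the step you flag as delicate, and that step cannot be completed, because the statement is false under the hypothesis $s^2\notin(N:_RM)$ alone. The inclusion you need, $(N:_M s^2)\subseteq(N:_M s)$, is Lemma \ref{Lemma 2.16}(1) with $t=s^2$ and therefore requires $s\cdot s^2=s^3\notin(N:_RM)$; and $s^3\notin(N:_RM)$ does not follow from $s^2\notin(N:_RM)$ together with the $s$-primeness of $(N:_RM)$ --- applying the $s$-primeness of that ideal to the factorization $s\cdot s^2$ of $s^3$ merely returns the alternatives ``$s^2\in(N:_RM)$ or $s^3\in(N:_RM)$'', which excludes nothing. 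A concrete counterexample to (2): take $G=\{e\}$, $R=M=k[X]/\langle X^3\rangle$ with $k$ a field, $N=\{0\}$, $s=\overline{X}$. Then $(N:_RM)=\{0\}$ and $s^2=\overline{X}^2\neq 0$. The submodule $N$ is graded $s$-prime: if $rm=0$, then either $r$ is a unit (so $m=0$), or $r\in\langle\overline{X}^2\rangle$ (so $sr=0$), or $r$ has zero constant term and nonzero $\overline{X}$-coefficient, which forces $m\in\langle\overline{X}^2\rangle$ and hence $sm=0$. But $(N:_M s)=\langle\overline{X}^2\rangle$ is not prime, since $\overline{X}\cdot\overline{X}\in\langle\overline{X}^2\rangle$ while $\overline{X}\notin\langle\overline{X}^2\rangle=(\langle\overline{X}^2\rangle:_R R)$.

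For what it is worth, the paper's own proof of (2) silently commits the very error you were guarding against: it factors $(sr)m\in N$ (the mirror image of your $r(sm)\in N$), lands in the alternative $s^2r\in(N:_RM)$, and then asserts $((N:_RM):_Rs^2)\subseteq((N:_RM):_Rs)$ ``by Lemma \ref{Lemma 2.16}'' without verifying that lemma's hypothesis $s^3\notin(N:_RM)$. Both your route and the paper's become correct if the hypothesis of (2) is strengthened to $s^3\notin(N:_RM)$ (which already implies $s^2\notin(N:_RM)$); with that change your argument closes verbatim via Lemma \ref{Lemma 2.16}(1) applied to $t=s^2$. So your suspicion that controlling $s^3$ from $s^2$ alone is not automatic was exactly right --- it is in fact impossible.
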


\begin{proof} \begin{enumerate}
\item
Assume that $(N :_{M} s)\in GSpec(_{R}M)$, and let $rm\in N$ for some $r\in h(R)$, $m\in h(M)$. Since $rm\in (N :_{M} s)$ and $(N :_{M} s)\in GSpec(_{R}M)$, we have $r\in ((N :_{M} s) :_{R} M)$ or $m\in (N :_{M} s)$, and then $rs\in (N :_{R} M)$ or $sm\in N$. Hence, $N\in GSpec_{S}(_{R}M)$.\\

 \item
 Suppose that $s^2\notin (N :_{R} M )$ and  $N\in GSpec_{s}(_{R}M)$. By Lemma \ref{5}, $(N:_{M}s)$ is a graded $R$-submodule of $M$. Let $r\in h(R)$, $m\in h(M)$ with $rm\in (N :_{M} s)$. Then $(sr)m\in N$. Since $N\in GSpec_{s}(_{R}M)$ , we have $s^{2}r\in (N :_{R} M)$ or $sm\in N$. If $sm\in N$, then there is nothing to prove. Suppose that $sm\notin N$. Then $s^{2}r\in (N :_{R} M)$ and so $r\in ((N :_{R} M) :_{R} s^{2}) \subset ((N :_{R} M) :_{R} s))$ by Lemma \ref{Lemma 2.16}. Hence, we have that $r\in ((N :_{M} s) :_{R} M)$ and so $(N :_{M} s)\in GSpec(_{R}M)$.
\end{enumerate}

\end{proof}

Now,  let $\mathfrak{D}_N=\{(N :_{M} t),\,  s\in  h^*(R)\setminus (N :_{R} M)\}$, notice that $(N :_{M} s)= M$ if and only if $s\in (N :_{R} M)$.
Since $N=(N :_{M} 1)$, $\mathfrak{D}_N$ is nonempty and we have:

\begin{thm}\label{maximal} Let $M$ be a graded $R$-module, and $N$ be a graded $R$-submodule of $M$.
 If $(N :_{M} s)$ is a maximal element in $\mathfrak{D}_N$, with respect to inclusion, then $(N :_{M} s)\in GSpec(_{R}M)$, and thus  $N\in GSpec_{s}(_{R}M)$

\end{thm}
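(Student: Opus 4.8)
The plan is to first show that $(N :_{M} s)$ is a proper graded \emph{prime} submodule of $M$, and then obtain $N\in GSpec_{s}(_{R}M)$ essentially for free via Proposition \ref{Theorem 2.18}(1). By Lemma \ref{5}, $(N :_{M} s)$ is a graded $R$-submodule of $M$, since $s\in h^*(R)$. Moreover, because membership of $s$ in the index set $h^*(R)\setminus(N :_{R} M)$ means exactly $s\notin(N :_{R} M)$, and since $(N :_{M} s)=M$ if and only if $s\in(N :_{R} M)$, the submodule $(N :_{M} s)$ is proper. This disposes of the ``properness'' requirement in the definition of a graded prime submodule.

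For the prime condition I would take homogeneous $r\in h(R)$ and $m\in h(M)$ with $rm\in(N :_{M} s)$, assume $m\notin(N :_{M} s)$, and aim to prove $r\in\bigl((N :_{M} s):_{R}M\bigr)$, which amounts to $sr\in(N :_{R} M)$. The natural element to test against maximality is $sr$, which is homogeneous as a product of homogeneous elements. If $sr=0$, or more generally $sr\in(N :_{R} M)$, then $r\in\bigl((N :_{M} s):_{R}M\bigr)$ immediately. Otherwise $sr\in h^*(R)\setminus(N :_{R} M)$, so $(N :_{M} sr)\in\mathfrak{D}_N$, and this is the case I must rule out.

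The key step is the containment $(N :_{M} s)\subseteq(N :_{M} sr)$: if $x\in(N :_{M} s)$ then $sx\in N$, and since $N$ is a submodule, $(sr)x=r(sx)\in N$, so $x\in(N :_{M} sr)$. Maximality of $(N :_{M} s)$ in $\mathfrak{D}_N$ then forces $(N :_{M} s)=(N :_{M} sr)$. But the hypothesis $rm\in(N :_{M} s)$ rewrites as $(sr)m\in N$, i.e. $m\in(N :_{M} sr)=(N :_{M} s)$, contradicting $m\notin(N :_{M} s)$. Hence the case $sr\notin(N :_{R} M)$ cannot occur, and we conclude $sr\in(N :_{R} M)$. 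This shows $(N :_{M} s)\in GSpec(_{R}M)$, and then Proposition \ref{Theorem 2.18}(1) yields $N\in GSpec_{s}(_{R}M)$.

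I expect the main obstacle to be purely one of correct bookkeeping rather than any deep difficulty: one must read the containment $(N :_{M} s)\subseteq(N :_{M} sr)$ in the right direction so that maximality is actually applicable, and one must handle the degenerate case $sr=0$ separately (using that $0\in(N :_{R} M)$ trivially). Once the auxiliary element $sr$ and its associated member $(N :_{M} sr)$ of $\mathfrak{D}_N$ are introduced, the argument is a short contradiction, and the passage from primeness of $(N :_{M} s)$ back to $N\in GSpec_{s}(_{R}M)$ is already packaged in Proposition \ref{Theorem 2.18}(1).
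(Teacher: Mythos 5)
Your proof is correct and follows essentially the same route as the paper's: both hinge on the containment $(N :_{M} s)\subseteq (N :_{M} sr)$ and the dichotomy, forced by maximality in $\mathfrak{D}_N$, between $sr\in (N :_{R} M)$ (equivalently $(N :_{M} sr)=M$) and $(N :_{M} sr)=(N :_{M} s)$. If anything, your version is slightly more careful in that by starting from $rm\in (N :_{M} s)$ you genuinely establish $(N :_{M} s)\in GSpec(_{R}M)$ before invoking Proposition \ref{Theorem 2.18}(1), whereas the paper's proof starts from $rm\in N$ and directly verifies only the $s$-prime condition for $N$.
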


\begin{proof}
Suppose that $(N :_{M} s)$ is a maximal element in $\mathfrak{D}_N$, with respect to inclusion. Let $rm\in N$ for some $r\in h(R)$, $m\in h(M)$.
Since $(N :_{M} s)\subseteq (N :_{M} sr)$, then by maximality we have $(N :_{M} sr)=M$ or $(N :_{M} s)=(N :_{M} sr)$.
If $(N :_{M} sr)=M$, then $sr M \subseteq N$, and so $rM \subseteq (N :_{M} s)$. Hence $r\in ((N :_{M} s) :_{R} M) $.
In case $(N :_{M} s)=(N :_{M} sr)$, then $m\in (N :_{M} r) \subseteq (N :_{M} sr)= (N :_{M} s)$.

\end{proof}
Recall  that $M$ is said to be  graded-Noetherian $R$-module if $M$ satisfies the ascending, respectively graded-Artinian
descending chain condition for graded submodules of M.
It is straightforward to verify that M is graded-Noetherian if and only if every graded
submodule is finitely generated, and also if and only if each non-empty family of graded submodules of M has a maximal element.
Similarly, $M$ is  graded-Artinian if and only if each non-empty family of graded submodules of $M$ has a minimal element, see \cite{Nastasescue}).
Note that an Artinian module is always Noetherian, see \cite{Attiyah}.

 In particular, If $M$ is a graded-Noetherian $R$-module, then $\mathfrak{D}_N$ always has a maximal element, hence we have:

 \begin{thm}\label{maximal-2} Let $M$ be a graded-Noetherian $R$-module, and $N$ be a graded $R$-submodule of $M$. Then there exists $s\in h^*(R)$ such that $N\in GSpec_{s}(_{R}M)$.
\end{thm}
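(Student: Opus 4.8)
The plan is to read off this statement directly from Theorem \ref{maximal} once we produce a maximal element of the family $\mathfrak{D}_N$, the entire content of the existence result being the guarantee, in the graded-Noetherian setting, that such a maximal element exists. So the strategy is: verify that $\mathfrak{D}_N$ is a nonempty family of graded $R$-submodules, extract a maximal element via the chain condition, and apply the previous theorem.

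First I would record the trivial bookkeeping. If $N=M$, then $(N:_{R}M)=R$, so no $s\in h^*(R)$ satisfies $s\notin(N:_{R}M)$ and the conclusion is vacuous; hence I take $N$ to be a proper graded $R$-submodule, which is the case implicitly intended. Properness gives $1\in h^*(R)\setminus(N:_{R}M)$, so that $(N:_{M}1)=N$ belongs to $\mathfrak{D}_N$, showing $\mathfrak{D}_N\neq\emptyset$. Next, by Lemma \ref{5} each element $(N:_{M}t)$ of $\mathfrak{D}_N$ is a graded $R$-submodule of $M$, so $\mathfrak{D}_N$ is a nonempty family of graded $R$-submodules of $M$.

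Now I would invoke the characterization of graded-Noetherian modules recalled just before the statement: in a graded-Noetherian module every nonempty family of graded submodules has a maximal element with respect to inclusion. Applying this to $\mathfrak{D}_N$ yields some $s\in h^*(R)\setminus(N:_{R}M)$ for which $(N:_{M}s)$ is maximal in $\mathfrak{D}_N$. At this point Theorem \ref{maximal} applies verbatim: maximality of $(N:_{M}s)$ forces $(N:_{M}s)\in GSpec(_{R}M)$ and hence $N\in GSpec_{s}(_{R}M)$, which is exactly the desired conclusion.

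There is no genuine analytic obstacle here, since all the work is already encapsulated in Theorem \ref{maximal}; the argument is essentially one line after the setup. The only points demanding care are purely organizational: confirming the nonemptiness of $\mathfrak{D}_N$ (which is what pins down the implicit hypothesis that $N$ is proper) and using the \emph{maximal-element} formulation of the graded-Noetherian condition rather than the ascending-chain-on-chains formulation, the two being equivalent for graded submodules. Once these are settled the theorem follows immediately.
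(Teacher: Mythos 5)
Your proposal is correct and matches the paper's argument exactly: the paper likewise observes that $\mathfrak{D}_N$ is nonempty (via $N=(N:_{M}1)$), uses the maximal-element characterization of graded-Noetherian modules to extract a maximal $(N:_{M}s)$, and concludes by Theorem \ref{maximal}. Your extra remark about the implicit properness of $N$ is a sensible clarification the paper leaves tacit, but it does not change the route.
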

Applying this to the case of trivial grading, we find:

 \begin{thm}\label{maximal-3}  Let $R$ a commutative ring with a nonzero unity $1$, and $M$ be a Noetherian $R$-module.
  Then for any $R$-submodule  $N$ of $M$ there exists $s\in R$ such that $N\in Spec_s(_{R}M)$.
\end{thm}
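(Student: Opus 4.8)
The plan is to obtain Theorem~\ref{maximal-3} as the specialization of Theorem~\ref{maximal-2} to the trivial grading. First I would equip $R$ and $M$ with the $G$-grading associated to the trivial group $G=\{e\}$, namely $R_e=R$ and $R_g=\{0\}$ for $g\neq e$, together with $M_e=M$ and $M_g=\{0\}$ for $g\neq e$. One checks at once that these satisfy the grading axioms, so $R$ is a $G$-graded ring and $M$ is a $G$-graded $R$-module.

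Next I would record the dictionary translating the graded hypotheses and conclusion into their classical counterparts under this trivial grading. Since $M_e=M$, we have $h(M)=M$ and $h(R)=R$, hence $h^*(R)=R\setminus\{0\}$; moreover every $R$-submodule $N$ of $M$ is automatically a graded $R$-submodule, because its only possibly nonzero homogeneous component is the one of degree $e$, which is $N$ itself. Consequently the graded $s$-prime condition collapses exactly to the non-graded $s$-prime condition, i.e.\ $s\notin(N:_{R}M)$ together with the implication that $rm\in N$ forces $sr\in(N:_{R}M)$ or $sm\in N$ for all $r\in R$ and $m\in M$. In other words, $GSpec_{s}(_{R}M)=Spec_{s}(_{R}M)$ in this setting.

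I would then verify that $M$ is graded-Noetherian if and only if it is Noetherian in the usual sense. As just observed, the graded $R$-submodules of $M$ are precisely all of its $R$-submodules, so the ascending chain condition on graded submodules coincides with the ordinary ascending chain condition. Thus the assumption that $M$ is a Noetherian $R$-module is exactly the assumption that $M$ is graded-Noetherian with respect to the trivial grading.

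Finally, given an arbitrary $R$-submodule $N$ of $M$ (which is a graded $R$-submodule by the above), Theorem~\ref{maximal-2} furnishes an element $s\in h^*(R)=R\setminus\{0\}\subseteq R$ with $N\in GSpec_{s}(_{R}M)$, and by the dictionary this reads $N\in Spec_{s}(_{R}M)$, which is the desired conclusion. The argument is essentially bookkeeping, and the only point requiring care is confirming that each graded notion, namely homogeneous elements, graded submodules, the graded $s$-prime property, and graded-Noetherianity, genuinely reduces to its classical form under the trivial grading; each such reduction is immediate precisely because $M$ is concentrated in degree $e$.
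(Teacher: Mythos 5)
Your proposal is correct and is exactly the paper's argument: the paper obtains Theorem~\ref{maximal-3} by the remark ``applying this to the case of trivial grading'' before the statement, i.e.\ by specializing Theorem~\ref{maximal-2} to $G=\{e\}$, which is precisely your reduction. Your version merely spells out the bookkeeping (homogeneous elements, graded submodules, graded-Noetherianity, and the $s$-prime condition all collapsing to their classical counterparts) that the paper leaves implicit.
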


If $R$ is $\mathbb{Z}$-graded, then  a graded $R$-module $M$ is graded-Noetherian if and only if  $M$ is Noetherian as a $R$-module, see \cite{Nastasescue} Theorem 5.4.7. Thus we have:

\begin{cor}\label{Z-gr} Let $R$ be a $\mathbb{Z}$-graded ring, and  $M$ be a graded-$R$-module. If $M$ is an Artinian or a Noetherian $R$-module, then for every  graded $R$-submodule $N$ of $M$ there exists $s\in h^*(R)$ such that $N\in GSpec_{s}(_{R}M)$.
\end{cor}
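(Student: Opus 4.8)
The plan is to reduce Corollary \ref{Z-gr} to the already-established Theorem \ref{maximal-2} by invoking the cited equivalence between the graded-Noetherian condition and the ordinary Noetherian condition for $\mathbb{Z}$-graded modules. First I would recall that Theorem \ref{maximal-2} guarantees, for any graded-Noetherian $R$-module $M$ and any graded $R$-submodule $N$, the existence of $s\in h^*(R)$ with $N\in GSpec_{s}(_{R}M)$. So the entire content of the corollary is to verify that the hypothesis of Theorem \ref{maximal-2}, namely that $M$ is graded-Noetherian, holds under the stated assumptions.

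The key step is the passage from ``Artinian or Noetherian as an $R$-module'' to ``graded-Noetherian.'' For the Noetherian case I would appeal directly to the cited result (\cite{Nastasescue}, Theorem 5.4.7), which states that for a $\mathbb{Z}$-graded ring $R$, a graded $R$-module $M$ is graded-Noetherian if and only if $M$ is Noetherian as an $R$-module. Thus if $M$ is Noetherian as an $R$-module, it is at once graded-Noetherian and Theorem \ref{maximal-2} applies verbatim. For the Artinian case I would use the remark made just before Theorem \ref{maximal-2}, namely that an Artinian module is always Noetherian (\cite{Attiyah}); hence if $M$ is Artinian as an $R$-module it is Noetherian as an $R$-module, and we are reduced to the previous case.

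Putting these together, in either hypothesis $M$ is Noetherian as an $R$-module, hence graded-Noetherian by the cited theorem, so Theorem \ref{maximal-2} yields the desired $s\in h^*(R)$ with $N\in GSpec_{s}(_{R}M)$ for every graded $R$-submodule $N$. I do not expect any genuine obstacle here, since the proof is purely a matter of chaining the two cited implications; the only point demanding a little care is making sure the Artinian-implies-Noetherian implication is being used at the level of $R$-modules (not merely for the graded structure) so that the cited $\mathbb{Z}$-graded equivalence can then be applied to conclude graded-Noetherianity.
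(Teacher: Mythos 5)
Your proposal is correct and follows essentially the same route as the paper, which states the corollary as an immediate consequence of Theorem \ref{maximal-2} via the cited equivalence (\cite{Nastasescue}, Theorem 5.4.7) between graded-Noetherian and Noetherian for $\mathbb{Z}$-graded modules. Your handling of the Artinian case also matches the paper, since it relies on the same remark (``an Artinian module is always Noetherian'') that the paper itself invokes just before Theorem \ref{maximal-2}.
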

\begin{exa} \label{R[X]}
  A typical example for $\mathbb{Z}$-graded rings are polynomial ring. Indeed, let $T$ be a commutative ring with a nonzero unity $1$,  if we put
$T_n= T X^n$ for $n\geq0$, and $T_n= 0$ for $n<0$, then $R=T[X]$ is a $\mathbb{Z}$-graded ring.
Since $R[X]$ is Noetherian whenever $R$ is Noetherian, the above result enables us to produce many examples of graded $s$-prime modules.

\end{exa}

Suppose  that $R$ is $G$-graded ring, with $G$ being a finite group, and let $M$ be a graded $R$-module. Then by  \cite{Nastasescue}, Corollary 5.4.3, the following assertions are equivalent:
\begin{enumerate}
  \item $M$ is graded-Noetherian (resp graded-Artinian).
  \item $M$ is Noetherian (resp Artinian) as an $R_e$- module.
  \item $M$ is Noetherian (resp Artinian) in $R$-mod.
\end{enumerate}

 \begin{prop}\label{noath-1} Let $G$ be a finite group, $R$ be a $G$-graded ring, and  $M$ be a graded-$R$-module. If $M$ is an Artinian or a Noetherian $R_e$-module, then for every  graded $R$-submodule $N$ of $M$ there exists $s\in h^*(R)$ such that $N\in GSpec_{s}(_{R}M)$.
\end{prop}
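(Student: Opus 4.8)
The plan is to reduce the statement to the graded-Noetherian case already settled in Theorem~\ref{maximal-2}, using the three-way equivalence recorded just above from \cite{Nastasescue}, Corollary~5.4.3, which is available precisely because $G$ is finite.

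First I would unify the two hypotheses. If $M$ is Noetherian as an $R_e$-module there is nothing to reduce. If instead $M$ is Artinian as an $R_e$-module, then invoking the remark that an Artinian module is always Noetherian (\cite{Attiyah}) shows that $M$ is again Noetherian as an $R_e$-module. Hence under either assumption we may take $M$ to be Noetherian as an $R_e$-module.

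Next, since $G$ is finite, the equivalence of \cite{Nastasescue}, Corollary~5.4.3, between ``$M$ Noetherian as an $R_e$-module'' and ``$M$ graded-Noetherian'' lets me conclude that $M$ is a graded-Noetherian $R$-module. At this point Theorem~\ref{maximal-2} applies directly: for the given graded $R$-submodule $N$ there exists $s\in h^*(R)$ with $N\in GSpec_{s}(_{R}M)$, which is exactly the assertion.

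The only delicate step is the Artinian-to-Noetherian passage in the second paragraph; it rests on the cited module-theoretic remark rather than on the graded structure itself. Once $M$ is known to be $R_e$-Noetherian, the finiteness of $G$ transfers this to graded-Noetherianity through Corollary~5.4.3, and the existence of a graded $s$-prime representative is then immediate from Theorem~\ref{maximal-2}; no genuinely new computation is needed beyond what those two inputs provide.
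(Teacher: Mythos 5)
Your proposal is correct and follows exactly the route the paper intends: convert the Artinian hypothesis to a Noetherian one via the remark preceding Theorem~\ref{maximal-2}, use the finite-group equivalence of \cite{Nastasescue}, Corollary~5.4.3, to pass from $R_e$-Noetherian to graded-Noetherian, and then invoke Theorem~\ref{maximal-2}. The one delicate point you flag --- that an Artinian module is Noetherian --- is inherited from the paper's own stated remark rather than introduced by you, so your argument matches the paper's implicit proof in every respect.
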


Recall that a  group $G$ is called polycyclic-by-finite if there is a finite series ${e} = G_0 \triangleleft G_1 \triangleleft ... \triangleleft G_n = G$ of subgroups such that each $G_{i-1}$ is normal in $G_i$ and $G_i/G_{i-1}$ is either finite or cyclic for each $i$, \cite{Nastasescue}.

Also, the ring $R$ is called strongly graded of type $G$ if $1\in R_{\sigma}R_{\sigma^{-1}}$ for all $\sigma\in G$, which is equivalent to $R_{\tau}R_{\sigma}=R_{\tau\sigma}$ for all $\tau,\sigma\in G$, see \cite{Nastasescue}.

The importance of this type of grading lies in that fact that a graded-$R$-module $M=\displaystyle\bigoplus_{g\in G}M_{g}$ will be a Noetherian $R$-module if $M_e$ is a Noetherian $R_e$-module. Since a  graded-$R$-module $M$ is graded-Noetherian if it is  Noetherian as an $R$-module, we have the following:
 \begin{prop}\label{poly} Let $G$ be a polycyclic-by-finite group, $R$ be a strongly graded ring of type $G$, and  $M$ be a graded-$R$-module. If $M_e$ is a Noetherian $R_e$-module, then for every  graded $R$-submodule $N$ of $M$ there exists $s\in h^*(R)$ such that $N\in GSpec_{s}(_{R}M)$.
\end{prop}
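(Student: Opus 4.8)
The plan is to reduce the statement entirely to Theorem \ref{maximal-2}, whose only hypothesis is that $M$ be a graded-Noetherian $R$-module. Thus the whole task is to verify that, under the assumptions of the proposition, $M$ is graded-Noetherian; once this is established, Theorem \ref{maximal-2} immediately supplies, for every graded $R$-submodule $N$ of $M$, an element $s\in h^*(R)$ with $N\in GSpec_{s}(_{R}M)$, which is exactly the desired conclusion.

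First I would establish that $M$ is Noetherian as an ordinary $R$-module. This is precisely the transfer result recalled just before the statement: for a strongly graded ring $R$ of type a polycyclic-by-finite group $G$, a graded $R$-module $M$ is Noetherian over $R$ as soon as $M_e$ is Noetherian over $R_e$ (see \cite{Nastasescue}). The underlying mechanism is an induction along the polycyclic-by-finite series $\{e\}=G_0\triangleleft G_1\triangleleft\cdots\triangleleft G_n=G$: at each step one passes from $G_{i-1}$ to $G_i$, where the quotient $G_i/G_{i-1}$ is either finite, which is handled by the finite-group case in which graded-Noetherian, $R_e$-Noetherian, and $R$-Noetherian all coincide, or infinite cyclic, which is handled by a Hilbert-basis-theorem type argument exploiting the strong grading $R_\tau R_\sigma = R_{\tau\sigma}$.

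Next I would observe that being Noetherian as an $R$-module forces $M$ to be graded-Noetherian. Indeed, the graded $R$-submodules of $M$ form a subfamily of all $R$-submodules, so the ascending chain condition on arbitrary submodules restricts at once to the ascending chain condition on graded submodules; equivalently, any nonempty family of graded submodules, being in particular a nonempty family of submodules, possesses a maximal element. Hence $M$ is graded-Noetherian, and the hypothesis of Theorem \ref{maximal-2} is met.

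Finally, with $M$ graded-Noetherian in hand, the proposition becomes an immediate instance of Theorem \ref{maximal-2}. The only genuinely substantive ingredient is the first step, namely the transfer of Noetherianness along the strong grading over a polycyclic-by-finite group; this is where all the real work lies and where I would expect the main obstacle to be, were it not already supplied by the cited structural fact from \cite{Nastasescue} recalled in the paragraph preceding the statement. Given that fact, the remaining steps are purely formal, and no new computation is required.
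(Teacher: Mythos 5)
Your proposal is correct and follows exactly the paper's route: the paper likewise deduces from the strong grading over a polycyclic-by-finite group that $M_e$ Noetherian over $R_e$ forces $M$ to be Noetherian over $R$, hence graded-Noetherian, and then invokes Theorem~\ref{maximal-2}. The only difference is that you sketch the inductive mechanism behind the cited transfer result from \cite{Nastasescue}, which the paper simply quotes.
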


\section{graded $s$-prime submodules and crossed product grading} \label{sec 4}

Another interesting type of grading is the so-called crossed product. More precisely,  a grading $(R,G)$ is called crossed product over $supp(R,G)$ if for every $g\in supp(R,G)$, $R_g$ contains a unit. By \cite[Proposition 1.7]{Da10}, this is equivalent to the following: for each  $g\in supp(R,G)$, $R_{g}=R_eu_g$ for some unit $u_g\in R_{g}$. For example, consider a commutative ring $T$ with a nonzero unity $1$, and let  $R_n= T X^n$, $n \in \mathbb{Z}$, where $X$ is an indeterminate.
Then the ring of Laurent polynomials  $R=T[X, X^{-1}]$ is a $\mathbb{Z}$-graded ring, and it is clearly crossed product, see  \cite{Nastasescue} for  more examples.

 \begin{thm}\label{crossed-product} Let $(R,G)$ be a crossed product grading, and  $I=\displaystyle\bigoplus_{g\in G}I_{g}$  be a graded ideal of $R$. Then  $I \in GSpec_{s}(R)$, for some $s\in h^*(R)$, if and only if $I_e \in GSpec_{t}(R)$, for some $t\in R_e$.
\end{thm}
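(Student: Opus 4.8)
The plan is to exploit the rigidity of crossed product gradings: the characterization $R_g = R_e u_g$ with $u_g$ a unit lets me move freely between a homogeneous component of degree $g$ and the base component $R_e$ by multiplying by $u_g$ and its inverse $u_g^{-1} \in R_{g^{-1}}$. The one structural fact I would record first is that, for the graded ideal $I$ and any $g \in \mathrm{supp}(R,G)$, one has $a \in I \iff a u_g^{-1} \in I_e$ for $a \in R_g$: indeed $a u_g^{-1} \in I \cap R_e = I_e$ whenever $a \in I$, and conversely $I_e u_g \subseteq I \cap R_g$. I would also recall that, viewing $I$ as a graded $R$-submodule of $R$, one has $(I :_{R} R) = I$ since $1 \in R$, so that $GSpec_s(R)$ is exactly governed by Corollary \ref{Corollary 2.6}.

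For the forward implication, I would suppose $I \in GSpec_{s}(R)$ with $s \in R_\sigma$. Since $s \neq 0$ we have $\sigma \in \mathrm{supp}(R,G)$, so $u_\sigma$ is a unit and $u_\sigma^{-1} \in HU(R)$. Applying Lemma \ref{direct}(2) with this homogeneous unit gives $I \in GSpec_{t}(R)$ for $t := s u_\sigma^{-1} \in R_e$; here $t \neq 0$, and since $s \notin I$ one checks $t \notin I$ (else $s = t u_\sigma \in I$), hence $t \notin I_e$. To see $I_e$ is $t$-prime in $R_e$, I would take $a,b \in R_e$ with $ab \in I_e \subseteq I$; as $a,b$ are homogeneous of degree $e$ and $I \in GSpec_{t}(R)$, either $ta \in I$ or $tb \in I$, and both products lie in $R_e$, hence in $I \cap R_e = I_e$. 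This is exactly $t$-primeness of $I_e$.

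For the converse, I would set $s := t \in h^*(R)$. First $s \notin I$: otherwise $s \in I \cap R_e = I_e$, contradicting $t \notin I_e$. Now let $a \in R_\alpha$, $b \in R_\beta$ be homogeneous with $ab \in I$; if $a = 0$ or $b = 0$ the conclusion is immediate, so I may assume $\alpha,\beta \in \mathrm{supp}(R,G)$ and write $a = a_0 u_\alpha$, $b = b_0 u_\beta$ with $a_0,b_0 \in R_e$. Multiplying $ab \in I$ by the homogeneous unit $(u_\alpha u_\beta)^{-1}$ and using commutativity yields $a_0 b_0 = ab\,(u_\alpha u_\beta)^{-1} \in I \cap R_e = I_e$. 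Since $I_e$ is $t$-prime, $t a_0 \in I_e$ or $t b_0 \in I_e$; multiplying back by $u_\alpha$ (resp. $u_\beta$) and using that $I$ is an ideal gives $sa = (t a_0) u_\alpha \in I$ or $sb = (t b_0) u_\beta \in I$. By Corollary \ref{Corollary 2.6}, $I \in GSpec_{s}(R)$.

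The substantive steps are the two structural reductions: in the forward direction, the normalization of the witness $s$ to the identity component via Lemma \ref{direct}(2), and in the converse, the cancellation of $(u_\alpha u_\beta)^{-1}$ to push $ab \in I$ down to $a_0 b_0 \in I_e$. I expect the only delicate point to be the degree bookkeeping, namely ensuring $\alpha,\beta \in \mathrm{supp}(R,G)$ so that $u_\alpha,u_\beta$ exist and that $u_\sigma^{-1} \in R_{\sigma^{-1}}$ is genuinely homogeneous; commutativity of $R$ then makes all the unit cancellations routine once the degrees are tracked correctly.
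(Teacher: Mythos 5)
Your proposal is correct and follows essentially the same route as the paper: normalize the witness $s$ to $t\in R_e$ via the homogeneous unit $u_\sigma^{-1}$ and Lemma \ref{direct}(2) in the forward direction, and in the converse decompose homogeneous elements as $a=a_0u_\alpha$, $b=b_0u_\beta$, cancel the units to land $a_0b_0$ in $I_e$, and multiply back. Your version merely adds some explicit bookkeeping (checking $t\notin I_e$, $s\notin I$, and the $a=0$ or $b=0$ case) that the paper leaves implicit.
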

\begin{proof}
 Suppose that $I \in GSpec_{s}(R)$ for some $s\in h^*(R)$. Since $R$ is crossed product and $s$ is a homogenous element,  we have $s=t u_g$ for some $t\in R_e$ and a unit $u_g\in R_{g}$. Then by Lemma \ref{direct}, we have  $I \in GSpec_{t}(R)$. Now, let  $ab\in I_e$ for some $a, b\in R_e$, since $I$ is $t$-prime we get $ta\in I$ or $tb\in I$. Hence $ta\in I\cap R_e= I_e$ or $tb\in I\cap R_e= I_e$, and so $I_e \in GSpec_{t}(R)$ as desired.
 For the converse, Assume that  $I_e \in GSpec_{t}(R)$ for some $t\in R_e$, and let  $ab\in I$ where $a, b\in h^*(R)$. Again by the crossed product property, we have $a=a_e u_g $ and $b=b_e u_h $,  for some $a_e, b_e \in R_e$ and  units $u_g\in R_{g}$, $u_h\in R_{h}$.
 Therefore $a_e b_e u_gu_h=ab\in I$, and so $a_e b_e \in I$, hence  $a_e b_e \in I_e$. Since $I_e \in GSpec_{t}(R)$, we get  $ta_e\in I_e$ or $tb_e\in I_e$, which implies that $ta\in I$ or $tb\in I$. Thus $I \in GSpec_{t}(R)$.

\end{proof}

Now, let $G$ be any group, and consider a commutative ring $T$ with a nonzero unity $1$. Then the group ring  $R=T[G]$ is a $G$-graded ring by taking $R_g= T g$, $g \in G$, and $(R,G)$  is clearly a crossed product grading. Since  $R_e=T$, using the above theorem and Theorem \ref{maximal-3} we get the following:

 \begin{thm}\label{group ring} Let $G$ be a group, $R$  be a commutative ring with a nonzero unity $1$. If  $R$ is a Noatherian ring, then any  graded ideal of $R[G]$ is $s$-prime  for some $s\in h^*(R)$.
\end{thm}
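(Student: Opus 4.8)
The plan is to derive Theorem~\ref{group ring} as an immediate consequence of the two results the authors have explicitly flagged: Theorem~\ref{crossed-product} and Theorem~\ref{maximal-3}. First I would observe that the group ring $R[G]$ carries a natural $G$-grading with homogeneous components $(R[G])_g = Rg$ for each $g\in G$, and that each such component contains the unit $g$ (whose inverse $g^{-1}$ lies in $(R[G])_{g^{-1}}$). By \cite[Proposition 1.7]{Da10} this is exactly the statement that $(R[G],G)$ is a crossed product grading, so Theorem~\ref{crossed-product} applies. I would state this setup in one sentence and then let the identity component do the work.

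The key reduction is the identification of the identity component: since $(R[G])_e = Re = R$, the identity component of the grading is canonically the ring $R$ itself. Theorem~\ref{crossed-product} tells us that a graded ideal $I = \bigoplus_{g\in G} I_g$ of $R[G]$ is graded $s$-prime for some $s\in h^*(R[G])$ if and only if its identity component $I_e$ is $t$-prime for some $t\in (R[G])_e = R$. So the whole question is transported from $R[G]$ down to the ordinary ring $R$, where ``graded $s$-prime'' collapses to the plain (trivially graded) notion of $t$-prime ideal.

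Next I would invoke Theorem~\ref{maximal-3} to supply the required element $t$. Because $R$ is Noetherian, $R$ is in particular a Noetherian $R$-module over itself, so every ideal of $R$ — in particular $I_e$, which is an ordinary ideal of $R$ — is $t$-prime for some $t\in R$ by Theorem~\ref{maximal-3} (applied to the module $M = R$). Chaining this with the equivalence from Theorem~\ref{crossed-product} yields that $I$ is graded $s$-prime for a suitable $s\in h^*(R[G])$, which is the conclusion. A minor bookkeeping point is that Theorem~\ref{maximal-3} is stated for submodules $N$ of a Noetherian module $M$; here one takes $M=R$ and $N=I_e$, so that submodules are exactly ideals and the hypothesis ``$M$ Noetherian'' is precisely ``$R$ Noetherian.''

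I do not expect a genuine obstacle here, since the statement is essentially a corollary; the only thing to be careful about is the matching of notation, namely confirming that the $s$ produced lives in $h^*(R[G])$ rather than in $R$, and that $t\in R$ corresponds correctly under the crossed-product equivalence. In the write-up I would first verify the crossed-product hypothesis for $R[G]$, then apply Theorem~\ref{maximal-3} to obtain $t$ for the ideal $I_e\subseteq R$, and finally apply the backward direction of Theorem~\ref{crossed-product} to lift $t$-primeness of $I_e$ to graded $s$-primeness of $I$. The argument is short enough that it can be stated in three or four lines without any computation.
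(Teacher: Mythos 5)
Your proposal is correct and follows exactly the route the paper itself takes: the paper derives Theorem~\ref{group ring} in one line by noting that $R[G]$ is a crossed product grading with identity component $(R[G])_e=R$, applying Theorem~\ref{maximal-3} to the ideal $I_e$ of the Noetherian ring $R$, and then lifting via Theorem~\ref{crossed-product}. Your write-up is just a more careful spelling-out of that same argument.
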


\end{document}